\documentclass{llncs}
\usepackage[utf8]{inputenc}
\usepackage{amsmath}
\usepackage{amsfonts}
\usepackage{amssymb}

\textwidth15cm
\textheight20cm
\oddsidemargin0.7cm
\evensidemargin-0.2cm
\parindent0cm
\linespread{1.2}\normalsize
\frenchspacing
\parskip=1ex plus .25ex minus .25ex

\makeatother

\usepackage{extarrows}
\usepackage{todonotes}
\usepackage{tikz}
\usepackage{pgfplots}
\pgfplotsset{compat=1.5}
\pgfplotsset{width=7cm}

\pgfplotstableread[row sep=crcr]{
2  0.33333\\
3  0.45454\\
4  0.59999\\
5  0.72979\\
6  0.82571\\
7  0.89629\\
8  0.93935\\
9  0.96545\\
10 0.98066\\
}\fEslemgraver

\pgfplotstableread[row sep=crcr]{
2  0.90136\\
3  0.96131\\
4  0.98367\\
5  0.99283\\
6  0.99678\\
7  0.99853\\
8  0.99933\\
9  0.99969\\
10 0.99986\\
}\fEslemgroebner

\pgfplotstableread[row sep=crcr]{
2  0.03900\\		
3  0.03900\\
4  0.04007\\
5  0.04007\\	
6  5.31767\\	
7  9.13374\\	
8  15.9948\\
9  28.4415\\ 
10 51.1984\\
}\fEmixingtimegraver

\pgfplotstableread[row sep=crcr]{
2  9.62956\\
3  25.3443\\
4  60.7348\\
5  138.983\\
6  310.156\\
7  681.909\\
8  1484.82\\
9  3210.84\\
10 6905.87\\
}\fEmixingtimegroebner

\pgfplotstableread[row sep=crcr]{
1 0.454545\\
2 0.687432\\
3 0.821473\\
4 0.88219\\
}\sEslemgraver

\pgfplotstableread[row sep=crcr]{
1 0.961312\\
2 0.975799\\
3 0.982007\\
4 0.986166\\
}\sEslemgroebner

\pgfplotstableread[row sep=crcr]{
1 1.2683\\
2 2.66814\\
3 5.08503\\
4 7.99778\\
}\sEmixingtimegraver

\pgfplotstableread[row sep=crcr]{
1 25.3443\\
2 40.8186\\
3 55.0752\\
4 71.7865\\
}\sEmixingtimegroebner

\textwidth15cm
\textheight20cm
\oddsidemargin0.7cm
\evensidemargin-0.2cm
\parindent0cm
\linespread{1.2}\normalsize
\frenchspacing
\parskip=1ex plus .25ex minus .25ex

\makeatother

\numberwithin{equation}{section}
\numberwithin{proposition}{section}
\numberwithin{corollary}{section}
\numberwithin{lemma}{section}
\numberwithin{example}{section}

\makeatletter
\let\bfseries=\undefined
\DeclareRobustCommand\bfseries
{\not@math@alphabet\bfseries\mathbf
  \boldmath\fontseries\bfdefault\selectfont}
\makeatother

\newcommand{\fibervert}[3]{\vev_{\veb}\left(#1,#2,#3\right)}

\newcommand{\Z}{{\mathbb Z}}
\newcommand{\Zge}{\Z_{\ge 0}}
\newcommand{\N}{{\mathbb N}}

\newcommand{\F}{\mathcal{F}}

\newcommand{\slackedbox}[1]{\mathfrak{B}^{sl}_{#1}}
\newcommand{\slacked}[2]{\mathrm{SL}\left(#1,#2\right)}
\newcommand{\slackedmoves}[1]{\mathrm{SL}\left(#1\right)}
\newcommand{\normalbox}[1]{\mathfrak{B}_{#1}}
\newcommand{\standardbasis}[1]{\mathrm{E}_{#1}}

\newcommand{\lex}{\prec_{\textsc{lex}}}

\newcommand{\gr}[2]{\ve{\mathfrak g}^k\left(#1,#2\right)}
\newcommand{\UGB}[1]{\mathcal{U}\left(#1\right)}
\newcommand{\RGB}[1]{\mathcal{R}_{\prec}\left(#1\right)}
\newcommand{\GB}{\mathcal{R}_{\lex}{\left(A_k\right)}}

\newcommand{\cbox}[2]{\mathcal{B}_{#2}\left(#1\right)}

\newcommand{\fiber}[2]{\mathcal{F}_{#1,#2}}
\newcommand{\fibergraph}[3]{\textnormal{G}\left(\fiber{#1}{#2},#3\right)}
\newcommand{\normalgraph}[2]{\textnormal{G}(#1,#2)}

\newcommand{\ikern}[2]{\ker\left(#1\right)\cap\Z^{#2}}
\newcommand{\ideal}[1]{\langle #1 \rangle}

\newcommand{\M}{\mathcal{M}}

\renewcommand{\subset}{\subseteq}

\DeclareMathOperator{\supp}{supp}

\def\Graver{{\mathcal G}}

\usepackage{enumerate}
\def\ve#1{\mathchoice{\mbox{\boldmath$\displaystyle\bf#1$}}
{\mbox{\boldmath$\textstyle\bf#1$}}
{\mbox{\boldmath$\scriptstyle\bf#1$}}
{\mbox{\boldmath$\scriptscriptstyle\bf#1$}}}

\newcommand\veb{{\ve b}}

\newcommand\vece{{\ve e}}

\newcommand\veg{{\ve g}}

\newcommand\vem{{\ve m}}

\newcommand\veu{{\ve u}}
\newcommand\vev{{\ve v}}
\newcommand\vew{{\ve w}}
\newcommand\vex{{\ve x}}
\newcommand\vey{{\ve y}}
\newcommand\vez{{\ve z}}
\newcommand\veN{{\ve N}}

\newcommand\vertex{node}
\newcommand\vertices{nodes}


\newcommand{\eoproof}{\hspace*{\fill} $\square$ \vspace{5pt}}

\usepackage{ifthen}
\newcommand{\DeclareBracket}[3]{
  \newcommand{#1}[2][]{%
  \ifthenelse%
  {\equal{##1}{}}%
  {\left#2##2\right#3}%
  {\csname ##1l\endcsname#2##2\csname ##1r\endcsname#3}}}
\DeclareBracket\set\{\}
\DeclareBracket\floor\lfloor\rfloor
\DeclareBracket\ceil\lceil\rceil
\DeclareBracket\bracket[]
\DeclareBracket\paren()

\newenvironment{psmallmatrix}{\left(\smallmatrix}{\endsmallmatrix\right)}

\newcommand\FourBlockBig[5][\relax]{\begin{pmatrix}#2& #3\\#4&#5 \end{pmatrix}\ifx#1\relax\else^{(#1)}\fi}
\newcommand\FourBlock[5][\relax]{\begin{psmallmatrix}#2& #3\\#4&#5 \end{psmallmatrix}\ifx#1\relax\else{^{(#1)}}\fi}


\newcommand\TwoBlock[3][\relax]{[#2,#3]\ifx#1\relax\else{^{(#1)}}\fi}

\newcommand{\T}{{\intercal}} 

\usepackage{hyperref}
\begin{document}
\pagestyle{headings}  

\title{On the Connectivity of Fiber Graphs}
\author{Raymond Hemmecke\inst{1} \and Tobias
Windisch\inst{2}\thanks{The author was supported by
\emph{TopMath}, a graduate program of the \emph{Elite Network
of Bavaria} and the \emph{TUM Graduate School}.
He further acknowledges support from the
\emph{German National Academic Foundation}.}}

\institute{\email{\href{mailto:hemmecke@tum.de}{hemmecke@tum.de}};
Technische Universit\"at M\"unchen, Germany \and
\email{\href{mailto:windisch@ma.tum.de}{windisch@ma.tum.de}};
Technische Universit\"at M\"unchen, Germany}

\date{\today}

\maketitle

\begin{abstract}
We consider the connectivity of fiber graphs with respect to Gr\"obner basis and Graver basis moves.
First, we present a sequence of fiber graphs using moves from a Gr\"obner basis and prove that their edge-connectivity is lowest possible and can have an arbitrarily large distance from the minimal degree.
We then show that graph-theoretic properties of fiber graphs do not depend on the size of the right-hand side. This provides a counterexample to a conjecture of Engstr\"om on the \vertex-connectivity of fiber graphs. Our main result shows that the edge-connectivity in all fiber graphs of this counterexample is best possible if we use moves from Graver basis instead.
\vspace{0.5cm}\newline
{\bf{Keywords}:} Fiber connectivity, Gr\"obner basis, Graver basis, Fiber graph
\end{abstract}

\section{Introduction}\label{sec:Introduction}
Many applications in statistics require a deeper analysis of the structure of a \emph{fiber} of an integer matrix $A\in\Z^{d\times n}$ with $\ker(A)\cap\Zge^n=\{\mathbf{0}_n\}$ and a vector $\veb\in\Z^d$ defined as
\begin{equation}
	\fiber{A}{\veb}:=\{\veu\in\Zge^n: A\cdot\veu=\veb\}.
\end{equation}
Very often, one needs to sample elements of the set $\fiber{A}{\veb}$ randomly, for example in hypothesis testing for log-linear models~\cite[Chapter~1]{Drton2008}. The assumption $\ker(A)\cap\Zge^n=\{\mathbf{0}_n\}$ makes $\fiber{A}{\veb}$ finite for all $\veb\in\Z^d$. A random sampling on $\fiber{A}{\veb}$ can be realised by performing a random walk on a \emph{fiber graph} $\fibergraph{A}{\veb}{\mathcal{M}}$ which is defined for a set $\mathcal{M}\subset\ker(A)\cap\Z^d$ as the graph on the nodes $\fiber{A}{\veb}$ in which two nodes $\vev,\veu\in\fiber{A}{\veb}$ are adjacent if either $\vev-\veu\in\mathcal{M}$ or $\veu-\vev\in\mathcal{M}$. 
The set $\mathcal{M}$ can be seen as a set of directions or
\emph{moves} one is allowed to choose from during the random walk.
Since random walks on graphs are essentially the same as Markov chains
whose state space equals the \vertex-set of the graph -- in the
context of this paper $\fiber{A}{\veb}$ -- one can ask whether this
Markov chain converges against a stationary distribution. If $\fibergraph{A}{\veb}{\M}$ is connected and non-bipartite, the Markov chain is irreducible and aperiodic and hence convergence towards a stationary distribution is guaranteed~\cite[Theorem~4.9]{Levin2008}. Thus, the study of the connectedness of fiber graphs is an important question in statistics.

The idea of sampling from fiber graphs goes back to the seminal work \cite{Diaconis1998} of Diaconis and Sturmfels. They formulated the connectedness of fiber graphs equivalently in the language of commutative algebra: a set of moves $\mathcal{M}\subset\ikern{A}{n}\setminus\{\mathbf{0}_n\}$ makes the fiber graphs $\fibergraph{A}{\veb}{\mathcal{M}}$ connected for all $\veb\in\Z^d$ simultaneously if and only if the set of polynomials $\{\vex^{\vem^+}-\vex^{\vem^-}: \vem\in\M\}$ generates the \emph{toric ideal} $$I_A:=\ideal{\vex^{\veu^+}-\vex^{\veu^-}:\veu\in\ker(A)\cap\Z^n}.$$ 
The tools of commutative algebra provide a long list of moves which
generate the toric ideal finitely (see \cite[Chapters~3
and~10]{Loera2013}): every reduced Gr\"obner bases of $A$ with respect
to a term ordering $\prec$ on $\Zge^n$, denoted by $\RGB{A}$, the
universal Gr\"obner basis of $A$, denoted by $\UGB{A}$, and the Graver
basis of $A$, denoted by $\Graver(A)$, are Markov bases of $A$. We
call a fiber graph using moves from a Gr\"obner basis a
\emph{Gr\"obner fiber graph} and a fiber graph using moves from the
Graver basis a \emph{Graver fiber graph}.

When working with Markov chains, it is typical to ask: What can we say about the random walk and the convergence of the corresponding Markov chain? How long do we have to run the random walk until we have a sufficiently good approximation of its stationary distribution? Is there a difference in using moves from $\RGB{A}$ rather then from $\Graver(A)$ (see Example~\ref{exa:FiberGraphs})? In this paper we have a closer look at a more refined structural information of fiber graphs from which we think an answer to these questions can eventually be derived.

  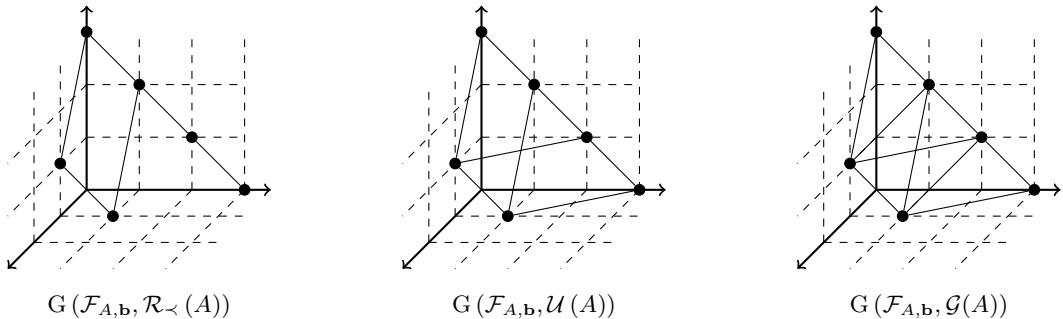
\begin{figure}[htbp]
	\begin{minipage}[b]{0.3\textwidth} 
	\centering
				\begin{tikzpicture}[scale=0.7]
	
	    \draw[->,thick] (0,0)  -- (3.5,0) node (xline) {};
	    \draw[->,thick] (0,0)  -- (0,3.5) node (zline) {};
	    \draw[->,thick] (0,0)  -- (-1.5,-1.5) node (yline) {};

	    \draw[dashed] (0,2)  -- (3,2) node () {};
	    \draw[dashed] (0,1)  -- (3,1) node () {};
	    
	    \draw[dashed] (2,0)  -- (2,3) node () {};
	    \draw[dashed] (1,0)  -- (1,3) node () {};
	    \draw[dashed] (3,0)  -- (3,3) node () {};
	    
	    \draw[dashed] (-0.5,-0.5)  -- (-0.5,2.5) node () {};
	    \draw[dashed] (-1,-1)  -- (-1,2) node () {};
	    
	    \draw[dashed] (0,1)  -- (-1.5,-0.5) node () {};
	    \draw[dashed] (0,2)  -- (-1.5,0.5) node () {};

	    \draw[dashed] (1,0)  -- (-0.5,-1.5) node () {};
	    \draw[dashed] (2,0)  -- (0.5,-1.5) node () {};
	    \draw[dashed] (3,0)  -- (1.5,-1.5) node () {};

	    \draw[dashed] (-0.5,-0.5)  -- (3,-0.5) node (xline) {};
	    \draw[dashed] (-1,-1)  -- (2.5,-1) node (xline) {};

		\node[draw,circle,inner sep=0.05cm,fill=black] (A) at 	(0,3) {}; 
		\node[draw,circle,inner sep=0.05cm,fill=black] (B) at 	(3,0) {}; 
		\node[draw,circle,inner sep=0.05cm,fill=black] (C) at 	(2,1) {}; 
		\node[draw,circle,inner sep=0.05cm,fill=black] (D) at 	(1,2) {}; 
		
		\node[draw,circle,inner sep=0.05cm,fill=black] (E) at 	(0.5,-0.5) {}; 
		\node[draw,circle,inner sep=0.05cm,fill=black] (F) at 	(-0.5,0.5) {}; 
		
		\path(A) edge[font=\small] (D);
		\path(D) edge[font=\small] (C);
		\path(C) edge[font=\small] (B);
		\path(E) edge[font=\small] (F);
		
		\path(A) edge[font=\small] (F);
		\path(D) edge[font=\small] (E);

\end{tikzpicture}
	$\fibergraph{A}{\veb}{\RGB{A}}$
	\end{minipage}\hfill
	\begin{minipage}[b]{0.3\textwidth} 
	\centering
				\begin{tikzpicture}[scale=0.7]
	
	    \draw[->,thick] (0,0)  -- (3.5,0) node (xline) {};
	    \draw[->,thick] (0,0)  -- (0,3.5) node (zline) {};
	    \draw[->,thick] (0,0)  -- (-1.5,-1.5) node (yline) {};

	    \draw[dashed] (0,2)  -- (3,2) node () {};
	    \draw[dashed] (0,1)  -- (3,1) node () {};
	    
	    \draw[dashed] (2,0)  -- (2,3) node () {};
	    \draw[dashed] (1,0)  -- (1,3) node () {};
	    \draw[dashed] (3,0)  -- (3,3) node () {};
	    
	    \draw[dashed] (-0.5,-0.5)  -- (-0.5,2.5) node () {};
	    \draw[dashed] (-1,-1)  -- (-1,2) node () {};
	    
	    \draw[dashed] (0,1)  -- (-1.5,-0.5) node () {};
	    \draw[dashed] (0,2)  -- (-1.5,0.5) node () {};

	    \draw[dashed] (1,0)  -- (-0.5,-1.5) node () {};
	    \draw[dashed] (2,0)  -- (0.5,-1.5) node () {};
	    \draw[dashed] (3,0)  -- (1.5,-1.5) node () {};

	    \draw[dashed] (-0.5,-0.5)  -- (3,-0.5) node (xline) {};
	    \draw[dashed] (-1,-1)  -- (2.5,-1) node (xline) {};

		\node[draw,circle,inner sep=0.05cm,fill=black] (A) at 	(0,3) {}; 
		\node[draw,circle,inner sep=0.05cm,fill=black] (B) at 	(3,0) {}; 
		\node[draw,circle,inner sep=0.05cm,fill=black] (C) at 	(2,1) {}; 
		\node[draw,circle,inner sep=0.05cm,fill=black] (D) at 	(1,2) {}; 
		
		\node[draw,circle,inner sep=0.05cm,fill=black] (E) at 	(0.5,-0.5) {}; 
		\node[draw,circle,inner sep=0.05cm,fill=black] (F) at 	(-0.5,0.5) {};

		\path(A) edge[font=\small] (D);
		\path(D) edge[font=\small] (C);
		\path(C) edge[font=\small] (B);
		\path(E) edge[font=\small] (F);
		
		\path(A) edge[font=\small] (F);
		\path(D) edge[font=\small] (E);
		
		\path(F) edge[font=\small] (C);
		\path(E) edge[font=\small] (B);

\end{tikzpicture}
	$\fibergraph{A}{\veb}{\UGB{A}}$
	\end{minipage}\hfill
		\begin{minipage}[b]{0.3\textwidth} 
	\centering
			\begin{tikzpicture}[scale=0.7]
	
	    \draw[->,thick] (0,0)  -- (3.5,0) node (xline) {};
	    \draw[->,thick] (0,0)  -- (0,3.5) node (zline) {};
	    \draw[->,thick] (0,0)  -- (-1.5,-1.5) node (yline) {};

	    \draw[dashed] (0,2)  -- (3,2) node () {};
	    \draw[dashed] (0,1)  -- (3,1) node () {};
	    
	    \draw[dashed] (2,0)  -- (2,3) node () {};
	    \draw[dashed] (1,0)  -- (1,3) node () {};
	    \draw[dashed] (3,0)  -- (3,3) node () {};
	    
	    \draw[dashed] (-0.5,-0.5)  -- (-0.5,2.5) node () {};
	    \draw[dashed] (-1,-1)  -- (-1,2) node () {};
	    
	    \draw[dashed] (0,1)  -- (-1.5,-0.5) node () {};
	    \draw[dashed] (0,2)  -- (-1.5,0.5) node () {};

	    \draw[dashed] (1,0)  -- (-0.5,-1.5) node () {};
	    \draw[dashed] (2,0)  -- (0.5,-1.5) node () {};
	    \draw[dashed] (3,0)  -- (1.5,-1.5) node () {};

	    \draw[dashed] (-0.5,-0.5)  -- (3,-0.5) node (xline) {};
	    \draw[dashed] (-1,-1)  -- (2.5,-1) node (xline) {};

		\node[draw,circle,inner sep=0.05cm,fill=black] (A) at 	(0,3) {}; 
		\node[draw,circle,inner sep=0.05cm,fill=black] (B) at 	(3,0) {}; 
		\node[draw,circle,inner sep=0.05cm,fill=black] (C) at 	(2,1) {}; 
		\node[draw,circle,inner sep=0.05cm,fill=black] (D) at 	(1,2) {}; 
		
		\node[draw,circle,inner sep=0.05cm,fill=black] (E) at 	(0.5,-0.5) {}; 
		\node[draw,circle,inner sep=0.05cm,fill=black] (F) at 	(-0.5,0.5) {};

		\path(A) edge[font=\small] (D);
		\path(D) edge[font=\small] (C);
		\path(C) edge[font=\small] (B);
		\path(E) edge[font=\small] (F);
		
		\path(A) edge[font=\small] (F);
		\path(D) edge[font=\small] (E);
		
		\path(F) edge[font=\small] (C);
		\path(E) edge[font=\small] (B);

		\path(E) edge[font=\small] (C);
		\path(F) edge[font=\small] (D);

\end{tikzpicture}
	$\fibergraph{A}{\veb}{\Graver(A)}$
	\end{minipage}
	\caption{Different fiber graphs of the same underlying fiber.}\label{fig:Example}
\end{figure}

\begin{example}\label{exa:FiberGraphs}
	Figure~\ref{fig:Example} shows the fiber graphs for the matrix
	$A=(1,1,2)\in\Z^{1\times 3}$ and the right-hand side $\veb=3$
	using different types of moves. Even if we see obvious
	differences in those three fiber graphs, the only statement we
	can make so far is that they are all connected. The mixing
	times of those fiber graphs with respect to the 
	Metropolis-Hastings chain as defined in Section~\ref{sec:ComputationalResults} read from left to right as follows:
	$5.78807$, $6.32917$, and $2.24376$. We see that the mixing
	time of the Graver fiber graph surpasses the mixing time of
	the Gr\"obner fiber graphs by far.
\end{example}

\par{To measure mixing we have to go beyond mere \emph{connectedness}. One possible measurement could be the \emph{connectivity} of the underlying fiber graph (see Section~\ref{sec:Connectivity}) which counts the number of paths between two \vertices.	It can be argued that the connectivity of a graph measures in some sense the possibility of `getting stuck' in a \vertex\ during a random walk and hence a small connectivity cannot lead to a good mixing time of the related Markov chain. In Section~\ref{sec:ComputationalResults} we present our computational results confirming this hypothesis.}

\par{Based on the assumption that a high connectivity is a desirable property of fiber graphs, Engstr\"om conjectured in a talk at IST Austria in 2012 that the \vertex-connectivity is best possible for Gr\"obner fiber graphs.}

\begin{conjecture}[Engstr\"om; 2012]\label{conj:Engstroem:1}
	Let $A\in\Z^{d\times n}$ be a matrix with $\ker(A)\cap\Z^n=\{\mathbf{0}_n\}$ and $\prec$ be a term ordering on $\Z^n$. Then for all $\veb\in\Z^d$, the \vertex-connectivity of $\fibergraph{A}{\veb}{\RGB{A}}$ equals its minimal degree.
\end{conjecture}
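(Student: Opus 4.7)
The goal is to establish $\kappa(\fibergraph{A}{\veb}{\RGB{A}}) \geq \delta$, where $\delta$ denotes the minimum degree of the Gröbner fiber graph, since the reverse inequality holds for every graph. My plan is to attack this lower bound via Menger's theorem: fix any two vertices $\veu, \vev \in \fiber{A}{\veb}$ and produce $\delta$ pairwise internally vertex-disjoint paths between them in $\fibergraph{A}{\veb}{\RGB{A}}$.

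To organize these paths, I would first exploit the defining property of a reduced Gröbner basis with respect to $\prec$: the fiber $\fiber{A}{\veb}$ possesses a unique $\prec$-minimal element $\vew^{\ast}$ (the normal form), and every vertex $\veu \neq \vew^{\ast}$ admits at least one reducing move $\vem \in \RGB{A}$ with $\veu - \vem \prec \veu$ and $\veu - \vem \in \fiber{A}{\veb}$. Consequently, every vertex has a monotone strictly decreasing path to $\vew^{\ast}$. I would then attempt to construct the $\delta$ disjoint paths by rooting them at the $\delta$ neighbors of a degree-minimizing vertex, following canonical monotone descents toward $\vew^{\ast}$, and gluing on a symmetric construction at the other endpoint. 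To maintain disjointness, whenever two monotone descents threaten to collide at an intermediate vertex, I would try a local rerouting using the Gröbner exchange property: when the leading terms of two basis moves overlap, an $S$-pair reduction provides an alternative route staying inside the fiber.

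The main obstacle, and the one I expect to be decisive, is the inherent asymmetry of a reduced Gröbner basis with respect to the combinatorial structure of the fiber. The set of down-moves applicable at a single vertex is generally strictly smaller than the total number of moves applicable there, so the $\delta$ neighbors of the degree-minimizing vertex will not in general spawn $\delta$ monotone descents that remain disjoint all the way to $\vew^{\ast}$; they must merge near the normal form, and the local $S$-pair rerouting is only guaranteed to succeed when very specific leading-term configurations occur. Moreover, Gröbner moves need not respect the coordinate-wise structure that Graver moves automatically enforce, so an intermediate vertex with only a few applicable moves can itself become a bottleneck whose removal separates the fiber into components, yielding a vertex cut of size strictly less than $\delta$. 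Honestly executing the Menger-style construction above would therefore require ruling out the existence of such bottleneck vertices in advance, and this is the step I would expect to fail — in fact, given the authors' announcement in the abstract, it must fail, and the counterexample presumably exhibits precisely such a low-connectivity bottleneck in a Gröbner fiber graph whose minimum degree is substantially larger.
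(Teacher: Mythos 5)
You correctly recognized that this statement cannot be proved and is, in fact, false; the paper's treatment of Conjecture~\ref{conj:Engstroem:1} is not a proof but a disproof by explicit counterexample, and your diagnosis of \emph{why} any Menger-style construction would break down matches the structure of that counterexample precisely. Your Menger approach via monotone Gr\"obner descents toward the unique normal form $\vew^{\ast}$ is the natural first attempt, and you identified the decisive obstruction: Gr\"obner moves need not respect the local combinatorial structure of the fiber, so a vertex can become a bottleneck whose removal disconnects the graph even though the minimum degree is large. This is exactly what happens in the paper. For the matrix $A_k$ from \eqref{def:Ak} and the right-hand side $\vece_{2k+1}$, the fiber decomposes via \eqref{equ:FiberPresentation} into exactly two boxes $\cbox{0}{\vece_{2k+1}}$ and $\cbox{1}{\vece_{2k+1}}$, each of which is internally well-connected (vertex-connectivity $k$ by Lemma~\ref{lem:PathsInbox}), but Theorem~\ref{theorem:ReducedGBToricIdeal} shows the reduced lexicographic Gr\"obner basis contains only a single move that can cross between the boxes, and in this fiber that move is applicable only once. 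The result is a bridge edge: the edge-connectivity, and hence the vertex-connectivity, of $\fibergraph{A_k}{\vece_{2k+1}}{\GB}$ is $1$ while the minimum degree is $k$ (Corollary~\ref{cor:CEconj1}), giving a gap that grows without bound in $k$.

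The one thing your proposal does not supply is the explicit construction itself; you predicted that such a bottleneck ``presumably'' exists but did not exhibit one, so your submission is an (accurate) failure analysis rather than a completed disproof. What the paper adds beyond your sketch is (i) the concrete family $A_k$, (ii) the explicit computation of the Graver basis (Theorem~\ref{theorem:Graver basis of A_k}) and the reduced lexicographic Gr\"obner basis (Theorem~\ref{theorem:ReducedGBToricIdeal}), and (iii) the box decomposition of the fiber that makes the single-bridge structure transparent. Those ingredients are exactly what one needs to convert your correct intuition into a theorem.
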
 

\par{A recent result of Potka supports
Conjecture~\ref{conj:Engstroem:1}. He proved in
\cite{Potka2013} that the \vertex-connectivity of certain
Gr\"obner fiber graphs of the $n\times n$ independence-model
is best possible. However, we show in Section \ref{sec:GroebnerFibers} that Conjecture
\ref{conj:Engstroem:1} is false in general. 
Let $I_k$ be the identity matrix in $\Z^{k\times k}$ and let
$\mathbf{1}_k$ be the $k$-dimensional vector having all entries equal to
$1$. For
\begin{equation}\label{def:Ak}
A_k:=\begin{pmatrix}
I_k & I_k & \mathbf{0} & \mathbf{0} & -\mathbf{1}_k & \mathbf{0} \\
\mathbf{0} & \mathbf{0} & I_k & I_k &  \mathbf{0}  & -\mathbf{1}_k\\
\mathbf{0} & \mathbf{0} & \mathbf{0} & \mathbf{0} & 1 & 1
\end{pmatrix}\in\Z^{(2k+1)\times(4k+2)}
\end{equation}
the underlying fiber graph of $\fiber{A_k}{\vece_{2k+1}}$ has node-connectivity $1$ and minimal degree $k$ when using moves from the reduced lexicographic Gr\"obner basis of $A_k$ (see Corollary \ref{cor:CEconj1}).
Hence, we cannot expect to have a best possible connectivity in
\emph{all} Gr\"obner fiber graphs. Thus,
\cite{Potka2013} poses a weaker follow-up conjecture which claims that
the \vertex-connectivity in fiber graphs is best possible if the
right-hand side is sufficently large.}

\begin{conjecture}[\cite{Potka2013}]\label{conj:Engstroem:2}
Let $A\in\Z^{d\times n}$ be a matrix and $\prec$ be a term ordering. There exists $\veN\in\Zge^d$ such that for all $\veb\in\Zge^d$ with $\veb_i\ge\veN_i$ for all $i\in[d]$, the \vertex-connectivity of the fiber graph $\fibergraph{A}{\veb}{\RGB{A}}$ equals its minimal degree.
\end{conjecture}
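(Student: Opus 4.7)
The plan is to attack Conjecture~\ref{conj:Engstroem:2} via Menger's theorem: the \vertex-connectivity of a graph equals the maximum number of internally \vertex-disjoint paths between any two \vertices. It then suffices to produce, for every $\veb$ componentwise sufficiently large, at least $\delta(\fibergraph{A}{\veb}{\RGB{A}})$ such paths between any prescribed pair of fibre elements.

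First I would pin down where $\delta$ is attained. At an interior \vertex\ $\veu\in\fiber{A}{\veb}$ whose positive support dominates the negative supports of every element of $\RGB{A}$, each Gr\"obner move is applicable, so the degree equals $|\RGB{A}|$; hence the minimum is realised on the boundary of the fibre. Once $\veb$ is componentwise large, this boundary family of minimum-degree \vertices\ should stabilise, yielding a canonical target~$\delta$. Next, I would construct the required disjoint paths locally. For adjacent \vertices\ $\veu$ and $\vev=\veu+\vem$, I would produce short detours $\veu\to\veu+\vem'\to\veu+\vem'+\vem\to\vev$ by commuting $\vem$ with as many alternative moves $\vem'\in\RGB{A}$ applicable at $\veu$ as possible, using the slack coming from $\veb\gg\mathbf{0}$ to keep the intermediate node in $\Zge^n$. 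For arbitrary pairs, I would concatenate such local detour families along a fixed connecting path and prune collisions, again leveraging the abundance of room in the large-$\veb$ regime.

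The hard part will be the inherently directed nature of moves in $\RGB{A}$: each $\vem\in\RGB{A}$ carries a prescribed negative support determined by $\prec$, so applicability of $\vem$ at $\veu$ depends on which coordinates of $\veu$ are \emph{strictly positive}, not merely on their magnitude. At a boundary \vertex\ whose support is concentrated on a small subset of coordinates, only a few moves can be applied in the required direction, and this restriction is invariant under scaling $\veb$ along recession directions that do not enlarge the support. Any honest proof must therefore explain how enlarging $\veb$ forces every minimum-degree \vertex\ to acquire sufficiently wide support, uniformly in the term ordering. This invariance seems to me to be at the heart of the matter: if it cannot be broken, the conjecture must fail, which would also explain the authors' subsequent turn to Graver bases in the positive results announced in the abstract.
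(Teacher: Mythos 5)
The statement you are attempting to prove is a conjecture that the paper in fact \emph{refutes}, so no proof plan can succeed and the paper offers none. The key mechanism is the one you identified in your final paragraph. Theorem~\ref{thm:IndependenceofMarginSize} (the Universality Theorem) shows that for any $A$, the augmented matrix
\begin{equation*}
A' = \begin{pmatrix} A & I_d \\ \mathbf{0} & I_d \end{pmatrix}
\end{equation*}
admits, for every bound $N$, a right-hand side $\veb'(N)$ with all coordinates at least $N$ such that $\fibergraph{A'}{\veb'(N)}{\mathcal{R}'}$ is graph-isomorphic to $\fibergraph{A}{\veb}{\mathcal{R}}$. Enlarging $\veb'$ along the slack block merely translates every fiber element by the same constant in the slack coordinates, without changing any support pattern in the original coordinates, so degrees, connectivity, and all graph invariants are frozen. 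Applying this to the matrix $A_k$ of Corollary~\ref{cor:CEconj1}, whose lexicographic Gr\"obner fiber graph for $\vece_{2k+1}$ has edge-connectivity $1$ but minimal degree $k$, produces the matrix $B_k$ and arbitrarily large right-hand sides exhibiting the same gap, i.e., a counterexample to Conjecture~\ref{conj:Engstroem:2}.

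The specific step in your plan that cannot be repaired is the hope that enlarging $\veb$ ``forces every minimum-degree vertex to acquire sufficiently wide support, uniformly in the term ordering.'' The slack construction above manufactures a family in which precisely this fails: the growth of $\veb$ occurs only along recession directions that do not enlarge the relevant supports, which is exactly the obstruction you named. You should upgrade your closing doubt to a firm conclusion --- the conjecture is false --- and note that the paper's positive result for Graver bases (Theorem~\ref{thm:GraverConnectivity}) is its constructive response to this failure.
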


\par{We prove in Section~\ref{sec:Connectivity} that once we observe a
bad connectivity in an arbitrary fiber of a matrix, we can construct
fibers of a related matrix whose right-hand side entries exceed any
given bound and whose connectivity remains bad. Thus, by modifying our
original counterexample \eqref{def:Ak}, we show in Section~\ref{sec:GroebnerFibers} that this gives rise to a counterexample to Conjecture~\ref{conj:Engstroem:2}.}

\par{Since these results diminish the hope for suitable connectivity in Gr\"obner fiber graphs, we consider in Section~\ref{sec:GraverFiber} a possible way out. We show that in all Graver fiber graphs of $A_k$, in particular even in those in which the Gr\"obner connectivity is lowest possible, the edge-connectivity best possible.}

\section{Connectivity and Fiber Graphs}\label{sec:Connectivity}

\par{In this section we recall some basic definitions from graph theory and introduce the framework of graph connectivity. We refer to \cite{Diestel2000} for a more general introduction to this field. Let $G=(V,E)$ be a simple graph in finitely many \vertices\ $V$ and edges $E$. In this notation, a fiber graph $\fibergraph{A}{\veb}{\M}$ can be written as $(\fiber{A}{\veb},\{\{\veu,\vev\}: \veu-\vev\in\pm\M\})$. We call
\begin{equation*}
\delta(G):=\min\{ \deg(v): v\in V\}
\end{equation*}
the \emph{minimal degree} of $G$ where $\deg(v)$ is the
cardinality of the neighborhood of $v$ in $G$. Let $k\in\Zge$, then
$G$ is \emph{$k$-\vertex-connected} if $|V|>k$ and if for all $X\subset V$
such that $|X|<k$, the induced graph of $G$ on the
\vertices\ $V\setminus X$ is connected.
In addition, the \emph{\vertex-connectivity} of $G$ is
\begin{equation*}
\kappa(G):=\max\{k\in\Zge: G\textnormal{ is
}k\textnormal{-\vertex-connected}\}.
\end{equation*}
Similarly, $G$ is \emph{$k$-edge-connected} if $|E|>k$ and if for all
$X\subset E$ such that $|X|<k$ the graph $(V,E\setminus X)$ is
connected. The
\emph{edge-connectivity} of $G$ is
\begin{equation*}
\lambda(G):=\max\{k\in\Zge: G\textnormal{ is
}k\textnormal{-edge-connected}\}.
\end{equation*}
For every graph $G$ we have 
$\delta(G)\ge\lambda(G)\ge\kappa(G)$ \cite[Chapter~1.4]{Diestel2000}.
For example, we obtain $\delta(G)\ge\lambda(G)$ by removing all
adjacent edges from a \vertex\ with minimal degree in $G$, which
isolates this \vertex\ and hence gives a disconnected graph. The edge-connectivity of $G$ is \emph{best possible} if $\delta(G)=\lambda(G)$ and similarly the
\vertex-connectivity is \emph{best possible} if
$\delta(G)=\kappa(G)$. Even if
these definitions look very convenient at a first glance, they are
rather unwieldy for proving general results about fiber graphs. For
our purposes, an equivalent property based on the number of paths
between two \vertices\ turns out to be more useful and enables us to
put hands on the connectivity of fiber graphs (see also Menger's
Theorem \cite[Chapter~3.3]{Diestel2000}). 
To obtain a lower bound on the \vertex-connectivity of a graph,
we only have to determine the number of
paths between \vertices\ whose neighborhoods have a non-empty
intersection rather than between all \vertices\ according to Liu's
criterion \cite{Liu1990}.
Since our main result concerns edge-connectivity, we modify Liu's original criterion and obtain a similar statement involving edge-connectivity (see Lemma~\ref{lem:EdgeConnectivity}). A proof of Liu's criterion can be found in \cite{Bjorner2010} and the idea behind the proof of Lemma~\ref{lem:EdgeConnectivity} is similar, which is the reason why we omit its proof here.}
\begin{lemma}\label{lem:EdgeConnectivity}
Let $k\in\Z_{\ge 0}$ and let $G=(V,E)$ be a connected graph with $|E|>k$. If for
all $u,v\in V$ such that $\{u,v\}\in E$ there are at least $k$ edge-disjoint paths from $u$ to $v$ in
$G$, then we have $\lambda(G)\ge k$.
\end{lemma}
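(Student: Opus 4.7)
The plan is to proceed by contradiction, mirroring the standard argument that derives Liu's criterion from Menger's theorem, but working with an edge cut rather than a vertex cut. Suppose that $\lambda(G) < k$. Then there exists a set $F \subset E$ with $|F| = \lambda(G) < k$ such that $(V, E \setminus F)$ is disconnected. Choose $F$ to be a minimum edge cut, so that $(V, E \setminus F)$ splits into (at least) two connected components, say with vertex sets $V_1$ and $V_2 = V \setminus V_1$, and every edge of $F$ has one endpoint in $V_1$ and the other in $V_2$.

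Next I would use the hypothesis $|E| > k$ together with the minimality of $F$ to locate an edge in the cut that can serve as one of the $k$ edge-disjoint paths. Since $F$ is non-empty (otherwise $G$ would already be disconnected, contradicting $|E|>k \ge 0$ paired with connectedness), pick any $e = \{u,v\} \in F$ with $u \in V_1$ and $v \in V_2$. By assumption, $\{u,v\}$ is an edge of $G$, so the hypothesis of the lemma applies: there exist $k$ pairwise edge-disjoint paths $P_1, \dots, P_k$ in $G$ joining $u$ and $v$.

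Now I would count the contribution of each $P_i$ to the cut $F$. Because $u$ and $v$ lie in different components of $(V, E \setminus F)$, every path from $u$ to $v$ in $G$ must traverse at least one edge of $F$. Since the paths $P_1, \dots, P_k$ are pairwise edge-disjoint, they use pairwise disjoint edges of $F$, so $|F| \ge k$. This contradicts $|F| < k$ and yields $\lambda(G) \ge k$.

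The argument is essentially a direct application of the cut/flow duality and has no subtle step; the only point that requires a little care is arguing that $F$ is non-empty and that some edge of $F$ can be chosen to be a genuine edge of $G$ between two vertices satisfying the hypothesis — but this is immediate because every edge of $F$ is by definition an edge of $G$ with endpoints in different components of $G - F$. This is exactly why the authors remark that the proof follows the same idea as Liu's original vertex-connectivity criterion and can safely be omitted.
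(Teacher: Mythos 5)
Your proof is correct: taking a minimum edge cut $F$ with $|F|=\lambda(G)<k$, observing that any $e=\{u,v\}\in F$ is an edge of $G$ with endpoints in different components of $G-F$, and then counting the distinct edges of $F$ used by the $k$ edge-disjoint $u$--$v$ paths gives the desired contradiction. The paper itself omits the proof, saying only that the idea is similar to the proof of Liu's vertex-connectivity criterion in \cite{Bjorner2010}, and your minimum-cut counting argument is precisely the natural edge-version of that argument the authors had in mind.
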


\par{Since $\delta(G)\ge\lambda(G)$, replacing $k$ with $\delta(G)$ in
	Lemma \ref{lem:EdgeConnectivity} gives a
	sufficient condition for the
	edge-connectivity to be best possible. The next lemma is very useful in the proof of Proposition~\ref{prop:CombiningGraphs}.}

\begin{lemma}\label{lem:DifferentPaths}
Let $G=(V,E)$ be a graph and $K\subset V$ with $|K|\le\lambda(G)$ and $v\in
V\setminus K$. Then there are $|K|$ edge-disjoint paths in $G$ connecting all
\vertices\ of $K$ with $v$.
\end{lemma}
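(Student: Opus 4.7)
\textbf{Proof plan for Lemma~\ref{lem:DifferentPaths}.} The idea is to reduce the statement to an application of Menger's theorem for edge-connectivity. Since $|K|\le\lambda(G)$, the minimum edge cut between any two vertices of $G$ has size at least $|K|$, and we want to combine the resulting $|K|$ edge-disjoint paths ending at $v$ into one coherent family, one path for each element of $K$. The standard way to do this is via an auxiliary graph.

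Concretely, the plan is to construct a graph $G'=(V',E')$ from $G$ by adding a new vertex $u\notin V$ together with exactly one new edge $\{u,w\}$ for every $w\in K$. In $G'$ I would then apply the edge-version of Menger's theorem to the pair $(u,v)$: the maximum number of edge-disjoint $u$-$v$ paths in $G'$ equals the minimum size of an edge set $S\subset E'$ whose removal disconnects $u$ from $v$.

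The core step is to show that every such $S$ satisfies $|S|\ge|K|$. Split $S=S_1\cup S_2$ with $S_1$ consisting of the added edges incident to $u$ and $S_2\subset E$. Let $K_1\subset K$ be the endpoints (other than $u$) of the edges in $S_1$. If $K_1=K$, then $|S|\ge|S_1|=|K|$ and we are done. Otherwise there exists $w\in K\setminus K_1$; this vertex is still joined to $u$ in $G'\setminus S$ by the edge $\{u,w\}$, so $u$ can only be separated from $v$ if $w$ is separated from $v$ inside $G\setminus S_2$. That means $S_2$ is an edge cut in $G$ separating two vertices, so $|S|\ge|S_2|\ge\lambda(G)\ge|K|$ by assumption. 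Hence in either case $|S|\ge|K|$, and Menger's theorem yields $|K|$ pairwise edge-disjoint $u$-$v$ paths $P_1,\ldots,P_{|K|}$ in $G'$.

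Each $P_i$ leaves $u$ through one of the newly added edges; by edge-disjointness these first edges are pairwise distinct, so the second vertices of the $P_i$ are $|K|$ distinct elements of $K$, which must then be all of $K$. Deleting the initial edge from every $P_i$ therefore produces $|K|$ edge-disjoint paths in $G$, one starting at each vertex of $K$ and ending at $v$, which is the claim. The only delicate point is the case analysis that guarantees $|S|\ge|K|$ for the auxiliary graph; all remaining bookkeeping is routine.
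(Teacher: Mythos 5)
Your proposal is correct and follows essentially the same route as the paper: both construct an auxiliary graph by adding a new vertex $u$ joined to every vertex of $K$, then extract $|K|$ edge-disjoint $u$--$v$ paths and strip the initial edges. The paper asserts the bound $\lambda(G^*)\ge|K|$ more tersely, whereas you verify it via an explicit Menger-style cut analysis, but the underlying construction and conclusion are the same.
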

\begin{proof}
We extend $G$ to a new graph $G^*$ by adding a \vertex\ $u$ and by inserting edges between $u$ and all \vertices\ of $K$. Since $|K|\le\lambda(G)$, $G^*$ must have edge-connectivity at least $|K|$ due to the fact that removal of $|K|-1$ edges does not disconnect $G^*$. The definition of edge-connectivity gives $|K|$ many edge-disjoint paths from $u$ to $v$ in $G^*$. Clearly, those paths connect all \vertices\ of $K$ with $v$ in $G$ as well and by removing $u$ from $G^*$ we obtain edge-disjoint paths from all \vertices\ of $K$ to $v$ in $G$.\eoproof\end{proof}
\vspace{-0.5cm}
\par{The following proposition helps us out in Section~\ref{sec:GraverFiber}.}
\begin{proposition}\label{prop:CombiningGraphs}
Let $G=(V_1\cup V_2,E)$ with $V_1\cap V_2=\emptyset$ and such that the induced subgraphs on $V_1$ and $V_2$ have both an edge-connectivity of at least $n$. Furthermore, assume that every \vertex\ in $V_1$ has at least $m\ge n+2$ neighbors in $V_2$. Let $v_1\in V_1$ and $v_2\in V_2$ be two adjacent \vertices\ such that there are $m$ \vertex-disjoint paths in $G$ connecting $v_1$ and $v_2$ which only use edges whose end-points are in $V_1$ and $V_2$, respectively. Then there are $m+n$ edge-disjoint paths in $G$ connecting $v_1$ and $v_2$.
\end{proposition}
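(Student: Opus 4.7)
The plan is to augment the given $m$ \vertex-disjoint paths $P_1,\ldots,P_m$ with $n$ further paths $Q_1,\ldots,Q_n$ so that the combined family of $m+n$ paths is pairwise edge-disjoint. The key structural observation is that, by hypothesis, every edge of every $P_j$ has one end-point in $V_1$ and the other in $V_2$, so the $P_j$'s use only crossing edges between $V_1$ and $V_2$. In particular, every $V_1$-internal and every $V_2$-internal edge of $G$ is free, which makes edge-disjointness between the $Q_i$'s and the $P_j$'s almost automatic and reduces the task to constructing the $Q_i$'s edge-disjointly from one another.

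The first step is to select $n$ crossing edges $\{p_1,q_1\},\ldots,\{p_n,q_n\}$ with $p_i\in V_1\setminus\{v_1\}$, $q_i\in V_2\setminus\{v_2\}$, such that the $p_i$ are pairwise distinct, the $q_i$ are pairwise distinct, and none of these crossings occurs on any $P_j$. Since the $P_j$'s are \vertex-disjoint, any $p\in V_1\setminus\{v_1\}$ lies on at most one $P_j$, which uses at most two crossings incident to $p$; as $p$ has at least $m\ge n+2$ crossing neighbors in $V_2$, at least $m-2\ge n$ of its crossings are unused by the $P_j$'s. The existence of $n$ pairwise distinct $p_i$ follows since $V_1$ must contain at least $n+1$ \vertices\ (otherwise it could not be $n$-edge-connected). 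The pairwise distinct $q_i$ are then obtained via a Hall-type matching argument on the bipartite graph whose edges are the fresh crossings between $\{p_1,\ldots,p_n\}$ and $V_2\setminus\{v_2\}$.

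Next, apply Lemma~\ref{lem:DifferentPaths} inside the induced subgraph on $V_1$ (whose edge-connectivity is at least $n$) with target \vertex\ $v_1$ and $K=\{p_1,\ldots,p_n\}$ of size $n$, obtaining $n$ edge-disjoint paths $A_1,\ldots,A_n$ within $V_1$ joining $v_1$ to $p_1,\ldots,p_n$ respectively. An analogous application in the $V_2$-subgraph, with target $v_2$ and $K=\{q_1,\ldots,q_n\}$, yields $n$ edge-disjoint paths $B_1,\ldots,B_n$ within $V_2$ joining $v_2$ to $q_1,\ldots,q_n$. Define $Q_i$ as the concatenation of $A_i$, the crossing edge $\{p_i,q_i\}$, and $B_i$.

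It then remains to verify edge-disjointness of the combined family. The $A_i$'s are pairwise edge-disjoint in $V_1$, the $B_i$'s are pairwise edge-disjoint in $V_2$, and the crossings $\{p_i,q_i\}$ are distinct because the $p_i$ are distinct, so the $Q_i$'s are pairwise edge-disjoint. Each $Q_i$ is edge-disjoint from every $P_j$ because the $P_j$ uses only crossings while the only crossing on $Q_i$ is the freshly chosen $\{p_i,q_i\}$. The main obstacle is the combinatorial selection in the first step: naive greedy picking of the $q_i$ becomes tight precisely when $m=n+2$, so a Hall-type or local exchange argument is needed to confirm that the hypothesis $m\ge n+2$ is sharp enough to realise the $n$ pairwise distinct fresh crossings simultaneously.
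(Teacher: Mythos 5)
Your overall strategy closely mirrors the paper's: keep the $m$ crossing-only paths $P_j$, manufacture $n$ further paths each of the shape ``stay in $V_1$ -- cross once via a fresh edge -- stay in $V_2$'', and observe that edge-disjointness of the combined family is then almost automatic. The one genuine difference is that the paper chooses the $V_1$-endpoints of the fresh crossings to be \emph{neighbors} $w_1,\ldots,w_n$ of $v_1$ (so the $V_1$-legs are single edges $\{v_1,w_i\}$ and Lemma~\ref{lem:DifferentPaths} is invoked only once, inside $V_2$), whereas you pick arbitrary distinct $p_i\in V_1\setminus\{v_1\}$ and invoke Lemma~\ref{lem:DifferentPaths} symmetrically on both sides. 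That symmetrization is harmless in itself.

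The gap is in the selection step where you insist that all $q_i$ lie in $V_2\setminus\{v_2\}$. Each $p_i$ is guaranteed only $m-2\ge n$ fresh crossings into $V_2$; if $\{p_i,v_2\}$ happens to be a fresh crossing (an edge not lying on any $P_j$), then only $n-1$ of $p_i$'s fresh crossings avoid $v_2$. In that case the bipartite graph on which you run Hall has minimum degree $n-1$ on the $\{p_1,\ldots,p_n\}$ side, and Hall's condition can fail for the full set $S=\{p_1,\ldots,p_n\}$: nothing stops all $n$ of the $p_i$'s from having their $V_2\setminus\{v_2\}$ fresh crossings concentrated on the same $n-1$ vertices (take each $p_i$ on a length-$\ge5$ path $P_j$ so that its two $P_j$-edges use up $a_i,c_i$, let $\{p_i,v_2\}$ be a chord, and let the remaining $n-1$ fresh crossings all hit a fixed set $\{u_1,\ldots,u_{n-1}\}$). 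So the ``Hall-type argument'' you defer to does not close this step as stated. The clean fix is exactly what the paper does: drop the requirement $q_i\neq v_2$. Without that restriction each $p_i$ has $\ge n$ fresh $V_2$-neighbors and a greedy or Hall argument trivially produces distinct $q_i$'s; since the $q_i$'s are distinct, at most one of them equals $v_2$, and for that single index you set $B_i=\emptyset$ and apply Lemma~\ref{lem:DifferentPaths} to the remaining $n-1$ targets. (An alternative repair would be to pick the $p_i$'s specifically among the predecessors of $v_2$ on the paths $P_j$ of length $\ge3$, which forces $\{p_i,v_2\}$ to be a \emph{used} crossing and hence removes $v_2$ from $p_i$'s fresh list -- but this is more delicate and you did not argue it.) Also note that Lemma~\ref{lem:DifferentPaths} requires the target vertex to lie outside the set $K$, which is precisely why $q_i=v_2$ must be treated as a degenerate case rather than excluded by fiat.
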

\begin{proof}
By assumption, we obtain for every $j\in[m]$ a path $P_j$ connecting
$v_1$ and $v_2$ which only uses edges between $V_1$ and $V_2$ (the
edge $\{v_1,v_2\}$ is regarded as a path):
\begin{equation}\label{equ:Pi}
v_1\xlongleftrightarrow{P_j}v_2.
\end{equation} 
Moreover, all these paths are pairwise \vertex-disjoint and hence
pairwise edge-disjoint. Denote by $N(v)$ the neighborhood of a
\vertex\ $v$ in $G$. Since, by assumption, the induced subgraph on
$V_1$ is $n$-edge-connected, we have that $v_1$ has at least $n$
neighbors in $V_1$, i.e., $|N(v_1)\cap V_1|\ge n$. Let
$w_1,\ldots,w_n$ be $n$ arbitrary \vertices\ in the neighborhood of
$v_1$ in $V_1$. Again by assumption, we know that $|N(w_i)\cap V_2|\ge
m$ for all $i\in[n]$. Our goal is to construct additional edge-disjoint paths between
$v_1$ and $v_2$. Since the paths $P_j$ are pairwise
\vertex-disjoint, every $w_i$ could have been used by at most one path
$P_j$. Hence, for every $i\in[n]$, up to two edges going from $w_i$
to $V_2$ could have been used by the paths $P_j$. In particular, there are still $m-2$ edges from
$w_i$ into $V_2$ which have not been used by any of the paths $P_j$.
Since we have $m-2\ge n$ by assumption, each $w_i$ has at least $n$
unused neighbors in $V_2$ and thus we can choose for every $i\in [n]$
a \vertex\ $k_i\in N(w_i)\cap V_2$ such that the edge $\{w_i,k_i\}$ is
not used by any of the paths $P_j$ and such that $k_i\neq k_{i'}$ for
all $i\neq i'$. By construction, $\{v_1,w_i,k_i\}$, $i\in [n]$, give
$n$ pairwise \vertex-disjoint paths from $v_1$ to $k_i$. Since the
induced subgraph on $V_2$ is also $n$-edge-connected we can apply Lemma~\ref{lem:DifferentPaths} on the set $\{k_i: i\in[n]\}$ and the node $v_2$ in the induced subgraph on $V_2$ and we obtain for every $i\in[n]$ a path $Q_i$ connecting $k_i$ with $v_2$ such that all of those paths are pairwise edge-disjoint (note that if there is an $i'\in[n]$ such that $k_{i'}=v_2$ we set $Q_{i'}=\emptyset$ and we still can apply the lemma on the smaller set consisting of the $w_i\neq w_{i'}$). 
All in all, we have for all $i\in [n]$ a path
\begin{equation}\label{equ:QiPaths}
v_1\xlongleftrightarrow{\{v_1,w_i\}}w_i\xlongleftrightarrow{\{w_i,k_i\}} k_i \xlongleftrightarrow{Q_i} v_2
\end{equation}
and by construction these paths are pairwise edge-disjoint. Since for all $i\in[n]$ the path from $v_1$ to $w_i$ stays completely in $V_1$, the path from $Q_i$ only uses edges connecting \vertices\ in $V_2$ and since the edges $\{w_i,k_i\}$ had not been used by the paths $P_j$ which on the other hand only uses edges between $V_1$ and $V_2$, the paths given in \eqref{equ:QiPaths} are pairwise edge-disjoint to all paths $P_j$, $j\in[m]$. This gives $m+n$ pairwise edge-disjoint paths in $G$ connecting $v_1$ and $v_2$.
\eoproof\end{proof}

\par{Our next result states that graph-theoretic properties of fiber graphs are independent of the size of the right-hand side.}

\begin{theorem}[Universality Theorem]\label{thm:IndependenceofMarginSize}
Every Gr\"obner fiber graph of a matrix $A\in\Z^{d\times n}$ is isomorphic to a Gr\"obner fiber graph of a matrix $A'\in\Z^{2d\times(n+d)}$ with arbitrarily large entries of its right-hand.
\end{theorem}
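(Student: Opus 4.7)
The plan is to embed $A$ into a $2d\times(n+d)$ matrix by appending $d$ slack columns, so that every fiber of $A$ reappears as a fiber of the larger matrix whose right-hand side can be shifted to have arbitrarily large entries. Concretely, I would set
\[
A':=\begin{pmatrix} A & I_d \\ \mathbf{0} & I_d\end{pmatrix}\in\Z^{2d\times (n+d)},
\]
with the last $d$ coordinates of $\Z^{n+d}$ representing slack variables. For any $\veb\in\Z^d$ and any integer $N$ large enough that $\veb+N\mathbf{1}_d\ge\mathbf{0}$, I set $\veb':=(\veb+N\mathbf{1}_d,\,N\mathbf{1}_d)\in\Zge^{2d}$. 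A direct computation shows that $(\veu,\ves)\in\fiber{A'}{\veb'}$ iff $\ves=N\mathbf{1}_d$ and $\veu\in\fiber{A}{\veb}$, so $\phi\colon\veu\mapsto(\veu,N\mathbf{1}_d)$ is a bijection between the two fibers. The assumption $\ikern{A}{n}\cap\Zge^n=\{\mathbf{0}\}$ carries over to $A'$ since $A'(\veu,\ves)=\mathbf{0}$ forces $\ves=\mathbf{0}$ and then $A\veu=\mathbf{0}$.

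The central observation is that $\ikern{A'}{n+d}=\set{(\vew,\mathbf{0}):\vew\in\ikern{A}{n}}$, so the toric ideal $I_{A'}$ is generated by binomials in which the slack variables never appear; it is essentially $I_A$ re-read inside the larger polynomial ring. I would extend the term ordering $\prec$ on $\Z^n$ to a block term ordering $\prec'$ on $\Z^{n+d}$ that compares the original $n$ coordinates first by $\prec$ and breaks ties on the slack coordinates by any auxiliary rule. Since every generator of $I_{A'}$ has zero exponent in the slack variables, its $\prec'$-leading monomial is determined by $\prec$ alone. Consequently, running Buchberger reduction under $\prec'$ returns the $\prec$-reduced Gr\"obner basis of $I_A$, each element padded with zeros in the slack slots; that is, $\RGB{A'}=\set{(\vem,\mathbf{0}):\vem\in\RGB{A}}$.

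Combining these pieces, $\phi(\veu)-\phi(\vev)=(\veu-\vev,\mathbf{0})$ lies in $\pm\RGB{A'}$ iff $\veu-\vev$ lies in $\pm\RGB{A}$, so $\phi$ is the desired graph isomorphism between $\fibergraph{A}{\veb}{\RGB{A}}$ and $\fibergraph{A'}{\veb'}{\RGB{A'}}$. Since $N$ is a free parameter and every coordinate of $\veb'$ is at least $N-\|\veb\|_\infty$, the entries of the new right-hand side can be made arbitrarily large by increasing $N$.

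The main obstacle I anticipate is making the Gr\"obner-basis correspondence rigorous, since reduced Gr\"obner bases depend on the ambient polynomial ring and its term ordering and need not transfer naively under such an extension. The simplification is that no binomial of $I_{A'}$ touches the slack variables, which reduces the task to verifying that the block extension of $\prec$ agrees with $\prec$ on precisely the monomials that actually appear in the kernel binomials. Once this is set up, the remainder of the proof is essentially bookkeeping.
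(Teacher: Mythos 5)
Your proposal is essentially the paper's own proof: same augmented matrix $A'=\begin{pmatrix}A & I_d\\ \mathbf{0} & I_d\end{pmatrix}$, same observation that $\ker(A')\cap\Z^{n+d}$ is the zero-padded kernel of $A$, same bijection $\veu\mapsto(\veu,N\mathbf{1}_d)$ between the fibers, and the same conclusion that this respects edges from the padded Gr\"obner basis. The only cosmetic differences are that the paper chooses its shift $\tilde n=\max\{N,N-\veb_i:i\in[d]\}$ so that all entries of the new right-hand side are at least $N$ exactly, while you use $N$ itself and observe the entries exceed $N-\|\veb\|_\infty$, and you spell out the block-ordering extension of $\prec$ a bit more explicitly than the paper's ``arbitrary extension $\prec'$ of $\prec$''.
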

\begin{proof} 
Let $\veb\in\Z^d$ be the right-hand side of an arbitrary fiber of $A$
and let $\mathcal{R}$ be a Gr\"obner basis of $A$ with respect to an
arbitrary term ordering $\prec$ on $\Z^n$. Consider the following matrix:
\begin{equation*}
A':=\begin{pmatrix} A & I_d \\ \mathbf{0} &
I_d\end{pmatrix}\in\Z^{2d\times(n+d)}.
\end{equation*}
Clearly, we have $\ikern{A'}{n+d}=\left\{(\vev,\mathbf{0})^T: \vev\in\ikern{A}{n}\right\}$. 
Thus, we obtain that $\mathcal{R}':=\{(\vev,\mathbf{0})^\T: \vev\in\mathcal{R}\}$ is a Gr\"obner basis of $A'$ with respect to an arbitrary extension $\prec'$ of $\prec$ on $\Z^{n+d}$.
We define for every bound $N\in\Zge$ the right-hand side
$\veb'(N):=\begin{pmatrix}\veb+ \tilde{n}\cdot\mathbf{1}_d\\
	\tilde{n}\cdot\mathbf{1}_d\end{pmatrix}\in\Z^{2d}$ where
$\tilde{n}:=\max\{N,N-\veb_i:i\in[d]\}\in\Zge$. It is easy to see that
have the following correlation between fibers of $A$ and $A'$:
$$\fiber{A'}{\veb'(N)}=\left\{\begin{pmatrix}\vev\\\tilde{n}\cdot\mathbf{1}_d\end{pmatrix}:\vev\in\fiber{A}{\veb}\right\}.$$
Thus, for every $N\in\Zge$ the map $\fiber{A}{\veb}\rightarrow\fiber{A'}{\veb'(N)},\vev\mapsto\begin{pmatrix}\vev\\\tilde{n}\cdot\mathbf{1}_d\end{pmatrix}$ gives a bijection from the \vertices\ of $\fiber{A'}{\veb'(N)}$ to the \vertices\ of $\fiber{A}{\veb}$. Even more, from the relation between the Gr\"obner bases $\mathcal{R}$ and $\mathcal{R}'$, this map respects the set of edges and hence it gives rise to a graph isomorphism between the graphs $\fibergraph{A}{\veb'(N)}{\mathcal{R}'}$ and $\fibergraph{A}{\veb}{\mathcal{R}}$ for all $N\in\Zge$.
Since we have $\veb'(N)\ge N\cdot\ve 1_{2d}$ we found a fiber graph
which is isomorphic to $\fibergraph{A}{\veb}{\mathcal{R}}$ such that
the right-hand side components exceeds every given bound $N\in\Zge$.
\eoproof\end{proof}
 
\par{It is not hard to see that Theorem~\ref{thm:IndependenceofMarginSize}
is true if we choose Universal Gr\"obner bases or Graver
bases as sets of allowed moves as well. With a view towards
Conjecture~\ref{conj:Engstroem:2}: due to the isomorphism, all
properties of the underlying graph
carry over and hence it is enough to consider a Gr\"obner fiber graph
of a matrix whose connectivity is strictly less than its minimal
degree in a low-sized right-hand side (see
Section~\ref{sec:GroebnerFibers}). Since there are no general tools
for determining the connectivity of fiber graphs available, we
establish
some definitions and lemmas from which our connectivity results in
Section~\ref{sec:GraverFiber} benefit from. First, we slightly extend
our definition of a fiber graph in Section~\ref{sec:Introduction} in
the sense that we do not only restrict on fibers as a set of nodes but
rather on arbitrary sets of integer points. For two sets
$\mathcal{F}\subset\Zge^k$ and $\M\subset\Z^k$, 
$\normalgraph{\mathcal{F}}{\M}$ is the graph on $\mathcal{F}$ in
which two \vertices\ $\vev,\veu\in\F$ are adjacent if either
$\vev-\veu\in\M$ or $\veu-\vev\in\M$. Given an integer vector
$\vew\in\Zge^k$, the \emph{box} of $\vew$ is
\begin{equation*}
 \normalbox{\vew}:=[\vew_1]\times[\vew_2]\times \cdots \times[\vew_k]\subset\Zge^k
\end{equation*}
and the standard basis of $\Z^k$ is $\standardbasis{k}:=\{\vece_i: i\in[k]\}\subset\Zge^k$. The next lemma states that the \vertex-connectivity is best possible
in the graph $\normalgraph{\normalbox{\vew}}{\standardbasis{k}}$. We omit the proof since it can easily archived as a consequence of the node-version of Lemma~\ref{lem:EdgeConnectivity}. Recall that for $\vew\in\Z^k$ the support $\supp(\vew)\subset[k]$ is the set of indices of all non-zero coordinates of $\vew$. }

\begin{lemma}\label{lem:BlockGraph}
For $\vew\in\Zge^k$, the minimal degree and \vertex-connectivity of the graph $\normalgraph{\normalbox{\vew}}{\standardbasis{k}}$ 
equals $|\supp(\vew)|$. 
\end{lemma}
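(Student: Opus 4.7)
The plan is to invoke the vertex-version of Lemma~\ref{lem:EdgeConnectivity} (Liu's criterion), which asserts that a connected graph $G=(V,E)$ with $|V|>k$ satisfies $\kappa(G)\ge k$ whenever every pair of adjacent vertices is joined by $k$ internally vertex-disjoint paths in $G$. Since $\kappa(G)\le\delta(G)$ always holds, the task reduces to computing $\delta(G)$ and then exhibiting $\delta(G)$ such paths across every edge.

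Setting $d:=|\supp(\vew)|$, I first show $\delta(G)=d$. For an arbitrary vertex $\vev\in\normalbox{\vew}$ and any index $i\in\supp(\vew)$, at least one of $\vev\pm\vece_i$ lies in $\normalbox{\vew}$ because $\vew_i\ge 1$; conversely, for $i\notin\supp(\vew)$ neither $\vev+\vece_i$ nor $\vev-\vece_i$ is in $\normalbox{\vew}$. Hence $\deg(\vev)\ge d$ for every vertex, with equality achieved at the corner $\mathbf{0}$, so $\delta(G)=d$ and $\kappa(G)\le d$ follows.

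For the matching lower bound I construct, for every edge $\{\veu,\vev\}$ of $\normalgraph{\normalbox{\vew}}{\standardbasis{k}}$, exactly $d$ internally vertex-disjoint paths between $\veu$ and $\vev$. Write $\vev=\veu+\vece_i$ with $i\in\supp(\vew)$; the edge itself is one such path. For each remaining $j\in\supp(\vew)\setminus\{i\}$, choose a sign $s_j\in\{+1,-1\}$ so that $\veu+s_j\vece_j\in\normalbox{\vew}$ (possible because $\vew_j\ge 1$), and form the length-three detour
\begin{equation*}
\veu\longrightarrow\veu+s_j\vece_j\longrightarrow\vev+s_j\vece_j\longrightarrow\vev.
\end{equation*}
The middle edge is valid since $i\neq j$ implies $(\vev+s_j\vece_j)_j=\veu_j+s_j\in\{0,\dots,\vew_j\}$ and $(\vev+s_j\vece_j)_i=\vev_i\in\{0,\dots,\vew_i\}$. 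Since distinct indices $j$ perturb distinct coordinates, the internal vertices of the detours are pairwise distinct and distinct from $\veu,\vev$; together with the direct edge this yields $d$ internally vertex-disjoint paths, so Liu's criterion gives $\kappa(G)\ge d$.

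Combining the two bounds yields $\kappa(G)=\delta(G)=|\supp(\vew)|$. I anticipate no genuine obstacle; the only mild bookkeeping is checking that the detour endpoints lie in $\normalbox{\vew}$ and that the detours are internally vertex-disjoint, both of which reduce to the single-coordinate perturbation observation above.
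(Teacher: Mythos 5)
Your computation of the minimal degree and the upper bound $\kappa(G)\le\delta(G)=|\supp(\vew)|$ are fine, and the detour construction for adjacent pairs is correct. The problem is in the criterion you invoke. You state that ``the vertex-version of Lemma~\ref{lem:EdgeConnectivity}'' says: if every \emph{adjacent} pair has $k$ internally vertex-disjoint paths, then $\kappa(G)\ge k$. That statement is false as a general graph-theoretic principle. Consider two copies of $K_{k+1}$ glued along a single vertex $w$: every pair of adjacent vertices lies inside one of the cliques and is joined by $k$ internally vertex-disjoint paths there, yet $w$ is a cut vertex, so $\kappa=1$. For edge-connectivity, checking adjacent pairs does suffice (this is exactly what Lemma~\ref{lem:EdgeConnectivity} exploits, because an edge cut always contains a crossing edge and Menger applies to its endpoints), but the analogous statement for vertex-connectivity fails because a vertex cut need not contain an endpoint of any specific edge. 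The paper is careful here: it states Liu's criterion for the \emph{vertex}-connectivity as requiring $k$ internally vertex-disjoint paths between all pairs of vertices \emph{whose neighborhoods have a non-empty intersection}, i.e.\ pairs at distance $\le 2$, not merely the adjacent ones.

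Consequently your argument, as written, does not establish the lower bound $\kappa(G)\ge|\supp(\vew)|$. To close the gap you would additionally have to exhibit $|\supp(\vew)|$ internally vertex-disjoint paths between every pair $\veu,\vev\in\normalbox{\vew}$ with a common neighbor, i.e.\ with $\vev-\veu\in\{\pm 2\vece_i,\ \pm(\vece_i+\vece_j),\ \pm(\vece_i-\vece_j)\}$ for distinct $i,j\in\supp(\vew)$. Analogous detour constructions work there (e.g., for $\vev=\veu+\vece_i+\vece_j$ take the two length-two direct paths plus, for each $l\in\supp(\vew)\setminus\{i,j\}$, a detour through $\veu+s_l\vece_l$ and $\vev+s_l\vece_l$), but these cases must be treated explicitly before Liu's criterion applies.
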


\par{In order to exploit a more refined structure of fiber graphs of
$A_k$ (see Section~\ref{sec:FiberStructure}), we first have a
look at sets of the following type: for a given set
$\F\subset\Z^k$ and a vector $\veb\in\Z^k$ the $\veb$-slack of
$\F$ is 
\begin{equation}\label{equ:SlackedBox}
\slacked{\F}{\veb}:=\left\{\begin{pmatrix}\vex\\\veb-\vex\end{pmatrix}:\vex\in\F\right\}\subset\Z^{2k}
\end{equation}
and the $\mathbf{0}_k$-slack of $\F$ is abbreviated as $\slackedmoves{\F}:=\slacked{\F}{\mathbf{0}_k}$. 
We need in Section~\ref{sec:Bases} and~\ref{sec:FiberStructure} the special case that $\F=\normalbox{\vew}$ and we denote its slack short by $\slackedbox{\vew}:=\slacked{\normalbox{\vew}}{\vew}$.
In the next lemma we show that the connectivity of a graph does not change by adding slacks to the set of \vertices\ if we slack the set of moves by $\mathbf{0}_k$, too.}

\begin{lemma}\label{lem:SlackedGraphs}
For $\veb\in\Zge^k$, $\F\subset\normalbox{\veb}$, and a set of moves $\M\subset\Z^k$ we have
\begin{equation}\label{equ:SlackedGraphIso}
\normalgraph{\F}{\M}\cong\normalgraph{\slacked{\F}{\veb}}{\slackedmoves{\M}}.
\end{equation}
\end{lemma}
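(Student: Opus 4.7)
The plan is to exhibit an explicit graph isomorphism, namely the map
$$\varphi\colon \F \longrightarrow \slacked{\F}{\veb},\qquad \vex \longmapsto \begin{pmatrix}\vex\\ \veb-\vex\end{pmatrix},$$
and verify the two required conditions: that $\varphi$ is a bijection on vertex sets and that it preserves edges in both directions.

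First I would observe that $\varphi$ is well-defined, since $\F\subset\normalbox{\veb}$ guarantees $\veb-\vex\in\Zge^k$ whenever $\vex\in\F$, so the image indeed lies in $\Zge^{2k}$ and, by definition \eqref{equ:SlackedBox}, in $\slacked{\F}{\veb}$. Next, $\varphi$ is surjective directly from the definition of $\slacked{\F}{\veb}$, and it is injective because the projection onto the first $k$ coordinates recovers $\vex$. Hence $\varphi$ is a bijection on vertex sets.

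The main (and only slightly delicate) step is to check that $\varphi$ respects adjacency. Let $\vex,\vey\in\F$ be distinct. Their images satisfy
$$\varphi(\vex)-\varphi(\vey)=\begin{pmatrix}\vex-\vey\\ (\veb-\vex)-(\veb-\vey)\end{pmatrix}=\begin{pmatrix}\vex-\vey\\ -(\vex-\vey)\end{pmatrix}=\begin{pmatrix}\vex-\vey\\ \mathbf{0}_k-(\vex-\vey)\end{pmatrix},$$
which by definition of $\slackedmoves{\M}=\slacked{\M}{\mathbf{0}_k}$ lies in $\slackedmoves{\M}$ if and only if $\vex-\vey\in\M$. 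The analogous statement with roles reversed gives $\varphi(\vey)-\varphi(\vex)\in\slackedmoves{\M}$ iff $\vey-\vex\in\M$. Therefore $\vex$ and $\vey$ are adjacent in $\normalgraph{\F}{\M}$ exactly when $\varphi(\vex)$ and $\varphi(\vey)$ are adjacent in $\normalgraph{\slacked{\F}{\veb}}{\slackedmoves{\M}}$, establishing \eqref{equ:SlackedGraphIso}.

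I do not anticipate a real obstacle here: the statement is essentially a bookkeeping lemma, and the construction forces $\varphi$ to be an isomorphism. The only subtle point worth being explicit about is that the adjacency check requires looking at both $\vex-\vey$ and $\vey-\vex$, since the fiber graph definition uses $\pm\M$; the slacked move set $\slackedmoves{\M}$ is defined analogously via $\pm\slackedmoves{\M}$, so the symmetry matches up cleanly. \eoproof
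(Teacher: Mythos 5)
Your proof is correct and uses exactly the same map as the paper, $\vex\mapsto(\vex,\veb-\vex)^\T$; the paper simply asserts that this bijection respects edges, while you spell out the well-definedness, bijectivity, and the adjacency computation. The extra detail is sound and the approach is identical.
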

\begin{proof}
Since $\F\subset\normalbox{\veb}$, we have $\slacked{\F}{\veb}\subset\Zge^{2k}$ and hence the graph on the right-hand side of \eqref{equ:SlackedGraphIso} is well-defined in the sense our definition given above. The map
\begin{equation*}
\mathcal{F}\rightarrow\slacked{\F}{\veb}, \vev\mapsto\begin{pmatrix}\vev\\\veb-\vev\end{pmatrix}
\end{equation*}
gives a bijection between the \vertices\ of the two graphs in \eqref{equ:SlackedGraphIso} which does not only respect the set of edges, but even more induces a bijection between them, too. \eoproof
\end{proof}

\section{Graver and Gr\"obner bases of $A_k$}\label{sec:Bases}

\par{In this section, we construct both the Graver basis and the reduced Gr\"obner basis with respect to a lexicographic term ordering of $A_k$ as defined in \eqref{def:Ak}.
For this, it is necessary to recall the definition of the Graver basis of a matrix first. 
Let $\sqsubseteq$ be the partial ordering on $\Z^n$ such that for two integer vectors $\veu,\vev\in\Z^n$ we have $\veu\sqsubseteq \vev$ if $\veu_i\cdot\vev_i\ge 0$ and $|\veu_i|\le|\vev_i|$ for all $i\in[n]$. 
The \emph{Graver basis} $\Graver(A)$ of a matrix $A\in\Z^{d\times n}$ is the set of all $\sqsubseteq$-minimal elements in $\ikern{A}{n}\setminus\{\mathbf{0}_n\}$.
Note that $\Graver(A)$ is always a finite set~\cite[Chapter~3]{Loera2013}.
When it comes to calculations of Graver bases, the following
definition is very helpful: for a non-negative vector $\vev\in\Zge^k$,
let $\chi(\vev)\in \{0,1\}^k$ be such that we have for all $i\in[k]$
\begin{equation*}
\chi(\vev)_i=\begin{cases}
0,&\textnormal{ if }\vev_i=0\\ 
1,&\textnormal{ if }\vev_i\neq 0\\ 
\end{cases}.
\end{equation*}}

\begin{theorem}\label{theorem:Graver basis of A_k}
For $k>0$, the Graver basis of $A_k$ is the (disjoint) union of
\begin{equation}\label{equ:GraverBasis}
\pm\slackedbox{-\mathbf{1}_k}\times\slackedbox{\mathbf{1}_k}\times\{-1\}\times\{1\} 
\end{equation}
and the sets
\begin{equation}\label{equ:GraverBasisStandardMoves}
\begin{split}
	&\pm\slackedmoves{\standardbasis{k}}\times\{\mathbf{0}_{2k}\}\times\{0\}\times\{0\} \textnormal{ and}\\
	&\pm\{\mathbf{0}_{2k}\}\times\slackedmoves{\standardbasis{k}}\times\{0\}\times\{0\}.
\end{split}
\end{equation}
\end{theorem}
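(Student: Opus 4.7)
The plan is to verify three properties: every vector listed in \eqref{equ:GraverBasis} and \eqref{equ:GraverBasisStandardMoves} lies in $\ikern{A_k}{4k+2}\setminus\{\mathbf{0}\}$, is $\sqsubseteq$-minimal there, and every nonzero element of $\ikern{A_k}{4k+2}$ has one of the listed vectors as a $\sqsubseteq$-predecessor. Writing a generic vector as $\vev=(\vex_1,\vex_2,\vey_1,\vey_2,s,t)$ with $\vex_j,\vey_j\in\Z^k$ and $s,t\in\Z$, the matrix $A_k$ sends $\vev$ to $(\vex_1+\vex_2-s\mathbf{1}_k,\,\vey_1+\vey_2-t\mathbf{1}_k,\,s+t)^\T$, so the kernel is cut out by $\vex_1+\vex_2=s\mathbf{1}_k$, $\vey_1+\vey_2=t\mathbf{1}_k$, and $s+t=0$. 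Checking that each listed vector satisfies these relations is immediate from the definitions of $\slackedbox{\,\cdot\,}$ and $\slackedmoves{\standardbasis{k}}$.

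I would organize the completeness argument by the value of the scalar $s$, using $t=-s$. First, no element with $|s|\ge 2$ can be $\sqsubseteq$-minimal: assuming $s\ge 2$, each coordinate $i$ has $(\vex_1)_i+(\vex_2)_i=s\ge 2$, so at least one of $(\vex_1)_i,(\vex_2)_i$ is strictly positive; set the corresponding entry of a witness $\vew$ to $1$ and its partner to $0$. Since $(\vey_1)_i+(\vey_2)_i=-s\le -2$, analogously set one of $(\vey_1')_i,(\vey_2')_i$ to $-1$ and the other to $0$. The resulting $\vew=(\vex_1',\vex_2',\vey_1',\vey_2',1,-1)$ lies in the kernel, is nonzero, and satisfies $\vew\sqsubseteq\vev$ with $\vew\ne\vev$; the case $s\le-2$ is symmetric. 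If $s=0$, the kernel relations give $\vex_2=-\vex_1$ and $\vey_2=-\vey_1$; if $\vev$ were $\sqsubseteq$-minimal with both $\vex_1\ne\mathbf{0}$ and $\vey_1\ne\mathbf{0}$, any $(\pm\vece_i,\mp\vece_i,\mathbf{0},\mathbf{0},0,0)$ with $i\in\supp(\vex_1)$ would be a strict predecessor; and $\sqsubseteq$-minimality within a single nonzero block further forces it to be a signed standard-basis vector, placing $\vev$ in \eqref{equ:GraverBasisStandardMoves}. For $s=-1$ (the case $s=1$ follows by negation), if some $((\vex_1)_i,(\vex_2)_i)\notin\{(-1,0),(0,-1)\}$ then one of the two entries must be positive and the other at most $-2$; a suitable $\pm(\vece_i,-\vece_i,\mathbf{0},\mathbf{0},0,0)$ from \eqref{equ:GraverBasisStandardMoves} is then a strict $\sqsubseteq$-predecessor of $\vev$, so $(\vex_1,\vex_2)\in\slackedbox{-\mathbf{1}_k}$, and symmetrically $(\vey_1,\vey_2)\in\slackedbox{\mathbf{1}_k}$, placing $\vev$ in \eqref{equ:GraverBasis}.

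Minimality of the listed vectors themselves is a short check in each case: any strict $\sqsubseteq$-predecessor $\vew$ of such a $\vev$ must have $s_{\vew}\in\{0,s\}$ with $t_{\vew}=-s_{\vew}$, and the box constraints $\vex_j\in\{-1,0\}^k$ or $\vex_j\in\{0,1\}^k$ together with the entrywise sign and magnitude conditions for $\sqsubseteq$ collapse every such $\vew$ to either $\vev$ or $\mathbf{0}$. The main obstacle is the coordinate-wise construction in the $|s|\ge 2$ step: one must produce a single global witness $\vew$ satisfying, simultaneously in all four blocks $\vex_1,\vex_2,\vey_1,\vey_2$, both the sign condition $(\vew)_j(\vev)_j\ge 0$ and the magnitude bound $|(\vew)_j|\le |(\vev)_j|$. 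The key observation that makes this possible is that in a block whose two sub-entries at a given index sum to an integer of absolute value at least two, at least one sub-entry carries the sign of that sum with magnitude at least one, providing the required slot for the $\pm 1$ of $\vew$ while the partner is safely sent to $0$.
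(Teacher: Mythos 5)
Your proposal is correct and takes essentially the same approach as the paper: both argue by showing every nonzero kernel element has a $\sqsubseteq$-predecessor in the listed set, casing on the sign of $s$ and constructing coordinate-wise sign-compatible witnesses. The paper handles all $s\ne 0$ uniformly via the support vectors $\chi(\vex_1^-)$ and $\chi(\vey_1^+)$ (rather than splitting $|s|=1$ from $|s|\ge 2$ as you do), and it leaves the $\sqsubseteq$-minimality of the listed vectors and their membership in the kernel implicit, which you spell out.
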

\begin{proof}
Denote the union of the sets given in \eqref{equ:GraverBasis} and \eqref{equ:GraverBasisStandardMoves} by
$G$. We show that for every $\veu\in\Z^{4k+2}$ with $\veu\neq\mathbf{0}_{4k+2}$ and $A_k\veu=\mathbf{0}_{2k+1}$
there exists $\veg\in G$ such that $\veg\sqsubseteq \veu$. We write $\veu=\left(\vex_1, \vex_2, \vey_1,\vey_2,s,t\right)^\T$
for vectors $\vex_1,\vex_2,\vey_1,\vey_2\in\Z^k$ and integers $s,t\in\Z$. The
block structure of $A_k$ yields the following equations:
\begin{equation}\label{eq:GraverProof}
\begin{split}
\vex_1+\vex_2&=s\cdot\mathbf{1}_k \\
\vey_1+\vey_2&=t\cdot\mathbf{1}_k \\
s+t&=0.
\end{split}
\end{equation}
We distinguish the following two cases.

Case 1: $s=-t=0$. Clearly, we have $\vex_1=-\vex_2$ and $\vey_1=-\vey_2$. As
$\veu\neq \mathbf{0}_{4k+2}$ we can assume without loss of generality that $\vex_1\neq
\mathbf{0}_{k}$. Thus, there is $i\in [k]$ and $\lambda\in\{-1,1\}$ such that

\begin{equation*}
\lambda\cdot\begin{pmatrix}\vece_i\\-\vece_i\end{pmatrix}\sqsubseteq\begin{pmatrix}\vex_1
\\ \vex_2\end{pmatrix}
\end{equation*}
which gives rise to an element in $\slackedmoves{\standardbasis{k}}\times\{\mathbf{0}_{2k}\}\times\{0\}\times\{0\}$ which is less than $\veu$ with respect to $\sqsubseteq$.

Case 2: $s=-t\neq 0$. Without restricting generality (since $G$ is symmetric
we can multiply $\veu$ by $-1$ if necessary) we can assume that $t>0$
and as $t$ is an integer we have $t\ge 1$ and thus $s=-t\le -1$. 
Clearly, we have $-\vex_1^-\sqsubseteq\vex_1$ and $\vex_1^+\sqsubseteq\vex_1$ and hence we have $-\chi(\vex_1^-)\sqsubseteq\vex_1$. As $s\le -1$, equation \eqref{eq:GraverProof} gives  $-\mathbf{1}_k+\chi(\vex_1^-)\sqsubseteq\vex_2$ which implies
\begin{equation*}
\begin{pmatrix}-\chi(\vex_1^-) \\ -\mathbf{1}_k+\chi(\vex_1^-)\end{pmatrix}
\sqsubseteq\begin{pmatrix}\vex_1\\\vex_2\end{pmatrix}.
\end{equation*}
Similarly, one can show that
\begin{equation*}
\begin{pmatrix}\chi(\vey_1^+) \\ \mathbf{1}_k-\chi(\vey_1^+)\end{pmatrix}
\sqsubseteq\begin{pmatrix}\vey_1\\\vey_2\end{pmatrix}.
\end{equation*}
Since $-\chi(\vex_i^-)\in\normalbox{-\mathbf{1}_k}$ and $\chi(\vey_1^+)\in\normalbox{\mathbf{1}_k}$ and due to $s\le -1$ and $t\ge 1$ we found an element in $\slackedbox{-\mathbf{1}_k}\times\slackedbox{\mathbf{1}_k}\times\{-1\}\times\{1\}\subset G$ which is less than $\veu$ with respect to the partial ordering $\sqsubseteq$.
\eoproof\end{proof}

\par{In the following, we consider a Gr\"obner basis with respect to
	the \emph{lexicographic ordering} $\lex$ on $\Zge^{n}$ where
	for two integer vectors $\veu,\vev\in\Zge^{n}$ with
$\veu\neq\vev$ we have $\veu\lex\vev$ if $\veu_i<\vev_i$
for the smallest $i\in[n]$ such that $\veu_i\neq\vev_i$. The next
theorem extracts the reduced Gr\"obner basis of $A_k$ with respect to $\lex$ from its Graver basis.}

\begin{theorem}\label{theorem:ReducedGBToricIdeal}
For $k>0$, the reduced Gr\"obner basis of $A_k$ with respect to $\lex$ consists of the vector
\begin{equation*}
(\mathbf{0}_k,\mathbf{1}_k,\mathbf{0}_k,-\mathbf{1}_k,1,-1)^\T
\end{equation*}
and the vectors of the sets
\begin{equation}\label{equ:GroebnerBasis}
\begin{split}
	\slackedmoves{\standardbasis{k}}\times\{\mathbf{0}_{2k}\}&\times\{0\}\times\{0\}\textnormal{ and}\\
	\{\mathbf{0}_{2k}\}\times\slackedmoves{\standardbasis{k}}&\times\{0\}\times\{0\}.
\end{split}
\end{equation}\end{theorem}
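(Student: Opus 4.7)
The plan is to combine Theorem~\ref{theorem:Graver basis of A_k} with the classical fact that, up to sign, every reduced Gröbner basis of a toric ideal is contained in the Graver basis (since $\RGB{A}\subset\UGB{A}\subset\Graver(A)$); this lets me bypass any S-pair computation and simply extract the reduced Gröbner basis from the Graver basis by leading-monomial minimality. Fixing the lex order under which the variables strictly decrease along $x_{11},\ldots,x_{1k},x_{21},\ldots,x_{2k},y_{11},\ldots,y_{2k},s,t$, I first compute for each of the three Graver families of Theorem~\ref{theorem:Graver basis of A_k} the leading monomial of the associated binomial. The standard-basis families yield $x_{1j}-x_{2j}$ and $y_{1j}-y_{2j}$ with leading monomials $x_{1j}$ and $y_{1j}$. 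An element of $\pm\slackedbox{-\mathbf{1}_k}\times\slackedbox{\mathbf{1}_k}\times\{-1\}\times\{1\}$ is parametrised by a pair of subsets $I,J\subset[k]$ and gives the binomial
\begin{equation*}
s\prod_{j\in I}x_{1j}\prod_{j\notin I}x_{2j}\;-\;t\prod_{j\in J}y_{1j}\prod_{j\notin J}y_{2j};
\end{equation*}
since every $x$-variable lex-dominates every $y$-variable as well as $s$ and $t$, its leading monomial is always the $x$-side $s\prod_{j\in I}x_{1j}\prod_{j\notin I}x_{2j}$, independent of $J$ and invariant under $\veg\mapsto-\veg$.

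Next, since the binomials of $\Graver(A_k)$ form a Gröbner basis of $I_{A_k}$ for every term order (because $\UGB{A_k}\subset\Graver(A_k)$), the monomial ideal $\mathrm{in}_\lex(I_{A_k})$ is generated by the leading monomials computed above, and the reduced Gröbner basis is the subset of Graver elements whose leading monomial is a \emph{minimal} such generator. The $x_{1j}$ and $y_{1j}$ are pairwise incomparable. For the third family, $s\prod_{j\in I}x_{1j}\prod_{j\notin I}x_{2j}$ is divisible by $x_{1j}$ whenever $I\neq\emptyset$, so only $I=\emptyset$ survives, producing the single minimal generator $s\prod_{j\in[k]}x_{2j}$; the $2^k$ values of $J$ all yield the same leading monomial and collapse to one basis element, which, after tracking the sign convention, corresponds exactly to the vector $(\mathbf{0}_k,\mathbf{1}_k,\mathbf{0}_k,-\mathbf{1}_k,1,-1)^\T$ of the statement.

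Finally, reducedness reduces to inspecting that the trailing monomials $x_{2j}$, $y_{2j}$, and $t\prod_{j}y_{2j}$ are not divisible by any other leading monomial (the variable patterns exclude $x_{1j'}$ and $y_{1j'}$, and a degree count excludes $s\prod_jx_{2j}$), and all leading coefficients are $1$. The only delicate point, and therefore the main obstacle, is the bookkeeping with the $\pm$ signs in Theorem~\ref{theorem:Graver basis of A_k}: one has to check that the binomial $\vex^{\veg^+}-\vex^{\veg^-}$ is invariant up to an overall sign under $\veg\mapsto-\veg$, so that the leading monomial depends only on the unsigned Graver element and the $(\emptyset,\emptyset)$ representative can be unambiguously identified with the distinguished vector in the statement.
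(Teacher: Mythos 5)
Your proposal is correct and follows the same route as the paper: cite the containment of the reduced Gr\"obner basis in the Graver basis (Sturmfels, Prop.~4.11) and then extract the reduced basis by leading-monomial minimality and tail-reducedness. The paper states this in a single sentence; you carry out the details — identifying the minimal generators $x_{1j}$, $y_{1j}$, $s\prod_j x_{2j}$ of the initial ideal and singling out, among the $2^k$ Graver elements sharing the leading monomial $s\prod_j x_{2j}$, the one ($J=\emptyset$) whose trailing monomial $t\prod_j y_{2j}$ is in normal form — which is exactly the unstated computation the paper's ``follows immediately'' is leaning on.
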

\begin{proof}
As any reduced Gr\"obner basis of $A_k$ is contained in the Graver basis of $A_k$ \cite[Proposition~4.11]{Sturmfels1996}, the result follows immediately by extracting those elements from the Graver basis $\Graver(A_k)$, given in Theorem~\ref{theorem:Graver basis of A_k}, that cannot be reduced by other elements of $\Graver(A_k)$ with respect to $\lex$. \eoproof
\end{proof}

\section{The Fiber-Structure of $A_k$}\label{sec:FiberStructure}

\par{Equipped with explicit descriptions of both the Graver basis and the reduced $\lex$-Gr\"obner basis of $A_k$, we discover in this section the structure of $\fiber{A_k}{\veb}$ for any given right-hand side vector $\veb\in\Z^{2k+1}$. We write $\veb=(\vew_1,\vew_2,c)^\T\in\Z^{2k+1}$ with vectors $\vew_1,\vew_2\in\Z^k$ and $c\in\Z$. We assume that $\fiber{A_k}{\veb}\neq\emptyset$ and hence we can choose an arbitrary element
$\veu\in\fiber{A_k}{\veb}$ and write $\veu=(\vex_1,\vex_2,\vey_1,\vey_2,s,t)^\T\in\Zge^{4k+2}$
with vectors $\vex_1,\vex_2,\vey_1,\vey_2\in\Zge^k$ and $s,t\in\Zge$. Since we
have $A_k\veu=\veb$, we obtain the following relations:
\begin{equation}\label{equ:EquationsFiberElement}
\begin{split}
\vex_1+\vex_2&=\vew_1+s\cdot\mathbf{1}_k\\
\vey_1+\vey_2&=\vew_2+t\cdot\mathbf{1}_k\\
s+t&=c.
\end{split}
\end{equation}
We see immediately that we must have $\vew_1+s\cdot\mathbf{1}_k\ge\mathbf{0}_k$,
$\vew_2+t\cdot\mathbf{1}_k\ge\mathbf{0}_k$ and  $c\ge 0$, since otherwise $\fiber{A_k}{\veb}=\emptyset$. As $t$ is uniquely determined by $t=c-s$, those inequalities give 
\begin{equation}\label{equ:InequalitiesforS}
\underbrace{\max\{(\vew_1^-)_i: i\in[k]\}}_{=\|\vew_1^-\|_\infty}\le s\le
c-\underbrace{\max\{(\vew_2^-)_i: i\in[k]\}}_{=\|\vew_2^-\|_\infty}.
\end{equation}
So we can define both a lower and an upper bound on $s$ by
\begin{equation*}
l(\veb):=\|\vew_1^-\|_\infty\textnormal{ and }u(\veb):=c-\|\vew_2^-\|_\infty.
\end{equation*}
If $l(\veb)>u(\veb)$, we certainly have $\fiber{A_k}{\veb}=\emptyset$ and hence we can assume that $l(\veb)\le u(\veb)$. The equations in \eqref{equ:EquationsFiberElement} suggest that we can regard $\vex_2$ and $\vey_2$ as slack variables since they are already uniquely determined by the choices of $\vex_1\in\normalbox{\vew_1+s\cdot\mathbf{1}_k}$ and $\vey_1\in\normalbox{\vew_2+(c-s)\cdot\mathbf{1}_k}$. Hence, any element of the fiber looks like
\begin{equation}\label{eq:fibervert}
\fibervert{\vex}{\vey}{s}:=\begin{pmatrix}
\vex \\
\vew_1+s\cdot\mathbf{1}_k -\vex\\
\vey \\
\vew_2+(c-s)\cdot\mathbf{1}_k -\vey\\
s\\
c-s\\
\end{pmatrix}
\end{equation}
for $\vex\in\normalbox{\vew_1+s\cdot\mathbf{1}_k}$ and $\vey\in\normalbox{\vew_2+(c-s)\cdot\mathbf{1}_k}$. Using our definition of slacked boxes as defined in \eqref{equ:SlackedBox}, we obtain an explicit description of elements in $\fiber{A_k}{\veb}$ which have their $(4k+1)$th coordinate equal to $s$:
\begin{equation}\label{equ:BlockRepresentation}
\cbox{s}{\veb}:=\slackedbox{\vew_1+s\cdot\mathbf{1}_k}\times\slackedbox{\vew_2+(c-s)\cdot\mathbf{1}_k}\times\{s\}\times\{c-s\}\subset\Zge^{2k+2k+2}.
\end{equation}
This gives us a very convenient partition of the fiber into $u(\veb)-l(\veb)+1$ disjoint sets:
\begin{equation}\label{equ:FiberPresentation}
\fiber{A_k}{\veb}=\bigcup_{s=l(\veb)}^{u(\veb)}\cbox{s}{\veb}.
\end{equation}
We see that the Graver moves from the sets defined in \eqref{equ:GraverBasis} connect \vertices\ from two adjacent boxes $\cbox{s_1}{\veb}$ and $\cbox{s_2}{\veb}$ with $|s_1-s_2|=1$, whereas Graver moves from \eqref{equ:GraverBasisStandardMoves} connect \vertices\ within the same box $\cbox{s}{\veb}$ (see Figure~\ref{fig:MoveTypes}). 

\begin{figure}[tbh]
	\centering
				\begin{tikzpicture}[scale=0.3]

\foreach \cube in {0,1} {

 \foreach \numberX in {0,1,2}{
	\foreach \numberY in {0,1,2}{
	
 	\node[draw,circle,fill=black,inner sep=0pt,minimum size=4pt,xshift=2*\cube cm] (\cube-0-\numberX-\numberY) at 	(2*\numberX,2*\numberY) {};	
	
	}}

 \foreach \numberX in {0,1,2}{
	\foreach \numberY in {0,1,2}{

 	\node[draw,circle,fill=black,inner sep=0pt,minimum size=4pt,xshift=0.2cm+2*\cube cm,yshift=0.2cm] (\cube-1-\numberX-\numberY) at 	(2*\numberX,2*\numberY) {};

	}}

 \foreach \numberX in {0,1,2}{
	\foreach \numberY in {0,1,2}{

 	\node[draw,circle,fill=black,inner sep=0pt,minimum size=4pt,xshift=0.4cm+2*\cube cm,yshift=0.4cm] (\cube-2-\numberX-\numberY) at 	(2*\numberX,2*\numberY) {};

	}}

\foreach \number in {0,1,2} {

   	\path (\cube-\number-0-0) edge[font=\small,dotted] (\cube-\number-0-1);
   	\path (\cube-\number-0-1) edge[font=\small,dotted] (\cube-\number-0-2);
	\path (\cube-\number-0-2) edge[font=\small,dotted] (\cube-\number-1-2);
	\path (\cube-\number-1-2) edge[font=\small,dotted] (\cube-\number-2-2);
	\path (\cube-\number-2-2) edge[font=\small,dotted] (\cube-\number-2-1);
	\path (\cube-\number-2-1) edge[font=\small,dotted] (\cube-\number-2-0);
	\path (\cube-\number-2-0) edge[font=\small,dotted] (\cube-\number-1-0);
	\path (\cube-\number-1-0) edge[font=\small,dotted] (\cube-\number-0-0);
	
	\path (\cube-\number-1-1) edge[font=\small,dotted] (\cube-\number-1-2);
	\path (\cube-\number-1-1) edge[font=\small,dotted] (\cube-\number-2-1);
	\path (\cube-\number-1-1) edge[font=\small,dotted] (\cube-\number-0-1);
	\path (\cube-\number-1-1) edge[font=\small,dotted] (\cube-\number-1-0);

}

 \foreach \numberX in {0,1,2}{
	\foreach \numberY in {0,1,2}{
	\path (\cube-0-\numberX-\numberY) edge[font=\small,dotted] (\cube-2-\numberX-\numberY);

}}

}
	
	\path (0-1-2-1) edge[font=\small,color=red] (1-1-0-0);
	\path (1-0-1-1) edge[font=\small,color=red] (1-0-2-1);
		
\end{tikzpicture}
	\caption{Different types of Graver moves of $A_k$.}\label{fig:MoveTypes}
\end{figure}
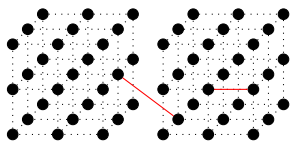

Even more, since the $(4k+1)$th and $(4k+2)$th coordinates coincide for all elements in $\cbox{s}{\veb}$, the next lemma follows immediately.}
\begin{lemma}\label{lem:BoxSubgraphsCoincide}
For $\veb\in\Z^{2k+1}$ and $s\in[l(\veb),u(\veb)]$, the following equality holds: 
\begin{equation*}
\normalgraph{\cbox{s}{\veb}}{\Graver(A_k)}=\normalgraph{\cbox{s}{\veb}}{\mathcal{R}_{\lex}(A_k)}.
\end{equation*}
\end{lemma}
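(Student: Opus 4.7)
The plan is to observe that any edge within $\cbox{s}{\veb}$ must be induced by a move whose last two coordinates vanish, and then match up such moves in the Graver basis and the reduced Gr\"obner basis of $A_k$.

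First I would note that by construction of $\cbox{s}{\veb}$ in \eqref{equ:BlockRepresentation}, every vertex in this set has last two coordinates equal to $s$ and $c-s$ respectively. Consequently, if $\veu,\vev\in\cbox{s}{\veb}$ are adjacent via some move $\vem$ (i.e.\ $\veu-\vev=\pm\vem$), then the last two entries of $\vem$ must both be $0$. Therefore, when computing the edges of the induced subgraph on $\cbox{s}{\veb}$, we may restrict the move set to those vectors whose last two coordinates vanish.

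Next I would extract this restriction from both $\Graver(A_k)$ and $\mathcal{R}_{\lex}(A_k)$. Using the explicit description in Theorem~\ref{theorem:Graver basis of A_k}, the Graver elements with $s=t=0$ are exactly
\[
\pm\slackedmoves{\standardbasis{k}}\times\{\mathbf{0}_{2k}\}\times\{0\}\times\{0\}\quad\text{and}\quad\pm\{\mathbf{0}_{2k}\}\times\slackedmoves{\standardbasis{k}}\times\{0\}\times\{0\},
\]
while the elements of \eqref{equ:GraverBasis} all have last coordinates $(\pm 1,\mp 1)\neq(0,0)$ and are thus excluded. Using Theorem~\ref{theorem:ReducedGBToricIdeal}, the reduced Gr\"obner basis $\mathcal{R}_{\lex}(A_k)$ consists of exactly the same standard-basis slack moves from \eqref{equ:GroebnerBasis} together with the single extra element $(\mathbf{0}_k,\mathbf{1}_k,\mathbf{0}_k,-\mathbf{1}_k,1,-1)^\T$, whose last two coordinates again do not vanish. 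Since in a fiber graph $\normalgraph{\F}{\M}$ only the moves $\pm\M$ matter, and since $\pm\slackedmoves{\standardbasis{k}}=\slackedmoves{\standardbasis{k}}$ (the set is symmetric around $\mathbf{0}$), the two sets of ``relevant'' moves agree.

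Putting these two observations together, the edge sets of $\normalgraph{\cbox{s}{\veb}}{\Graver(A_k)}$ and $\normalgraph{\cbox{s}{\veb}}{\mathcal{R}_{\lex}(A_k)}$ coincide on the common vertex set $\cbox{s}{\veb}$, which proves the lemma. There is no serious obstacle here; the argument is a bookkeeping check based on the explicit descriptions of the two bases in the previous section and the fact that the last two coordinates of every vertex in $\cbox{s}{\veb}$ are fixed.
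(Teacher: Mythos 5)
Your argument takes exactly the same route as the paper: the paper simply observes (in the sentence preceding the lemma) that all vertices of $\cbox{s}{\veb}$ share the same $(4k+1)$th and $(4k+2)$th coordinates, so only moves with vanishing last two coordinates can produce edges inside a box, and those moves are identical in $\Graver(A_k)$ and $\mathcal{R}_{\lex}(A_k)$ by Theorems~\ref{theorem:Graver basis of A_k} and~\ref{theorem:ReducedGBToricIdeal}. Your write-up is the expanded version of that one-line argument and is correct. One small slip: $\slackedmoves{\standardbasis{k}}=\{(\vece_i,-\vece_i)^\T:i\in[k]\}$ is \emph{not} symmetric under negation, so $\pm\slackedmoves{\standardbasis{k}}\neq\slackedmoves{\standardbasis{k}}$; that parenthetical claim is superfluous anyway, since your preceding observation that only $\pm\M$ matters for the edge set already makes the $\pm$ appearing in the Graver list and its absence in the Gr\"obner list irrelevant.
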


\par{Based on our observations in Section~\ref{sec:Connectivity}, we know that the \vertex-connectivity in those induced subgraphs is best possible as the next lemma shows.}

\begin{lemma}\label{lem:PathsInbox}
Let $\veb\in\Z^{2k+1}$ such that $\fiber{A_k}{\veb}\neq\emptyset$. For all $s\in[l(\veb),u(\veb)]$, the minimal degree and the \vertex-connectivity of the graph $\normalgraph{\cbox{\veb}{s}}{\Graver(A_k)}$ equal
\begin{equation*}
|\supp(\vew_1+s\cdot\mathbf{1}_k)|+|\supp(\vew_2+(c-s)\cdot\mathbf{1}_k)|.
\end{equation*}
\end{lemma}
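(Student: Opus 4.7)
The plan is to exhibit $\normalgraph{\cbox{s}{\veb}}{\Graver(A_k)}$ as a Cartesian product of two slacked box graphs, to identify those factors via Lemmas~\ref{lem:SlackedGraphs} and~\ref{lem:BlockGraph}, and then to combine them with a standard lower bound on the connectivity of a Cartesian product.

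First, inspecting Theorem~\ref{theorem:Graver basis of A_k} one sees that the Graver moves whose last two coordinates both vanish are exactly those listed in \eqref{equ:GraverBasisStandardMoves}; every other Graver move flips the $(4k+1)$st coordinate and hence leaves $\cbox{s}{\veb}$, in which this coordinate is pinned to $s$. Writing $\vew_1':=\vew_1+s\cdot\mathbf{1}_k$ and $\vew_2':=\vew_2+(c-s)\cdot\mathbf{1}_k$, the moves from the first line of \eqref{equ:GraverBasisStandardMoves} act only on the $\slackedbox{\vew_1'}$-block while those from the second line act only on the $\slackedbox{\vew_2'}$-block; so, after dropping the constant last two coordinates, the induced subgraph factors as a Cartesian product
$$\normalgraph{\cbox{s}{\veb}}{\Graver(A_k)}\;\cong\;H_1\,\square\,H_2,\qquad H_i:=\normalgraph{\slackedbox{\vew_i'}}{\slackedmoves{\standardbasis{k}}}.$$
Lemma~\ref{lem:SlackedGraphs} identifies $H_i\cong\normalgraph{\normalbox{\vew_i'}}{\standardbasis{k}}$, and Lemma~\ref{lem:BlockGraph} then gives $\delta(H_i)=\kappa(H_i)=|\supp(\vew_i')|$. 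Since minimal degrees are additive under the Cartesian product, $\delta(H_1\,\square\,H_2)=|\supp(\vew_1')|+|\supp(\vew_2')|$, which establishes the minimal-degree claim and, via $\kappa\le\delta$, the upper bound on the \vertex-connectivity.

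The matching lower bound on the \vertex-connectivity follows from Sabidussi's inequality $\kappa(G\,\square\,H)\ge\kappa(G)+\kappa(H)$ for non-trivial connected graphs. A self-contained substitute applies the \vertex-version of Lemma~\ref{lem:EdgeConnectivity}: for any two adjacent \vertices\ $(u_1,u_2)$ and $(v_1,u_2)$ of $H_1\,\square\,H_2$ differing only in the first coordinate, Menger's theorem inside $H_1$ gives $\kappa(H_1)$ internally \vertex-disjoint paths that live in the fiber $V(H_1)\times\{u_2\}$, and for $\kappa(H_2)$ distinct neighbors $w_2$ of $u_2$ in $H_2$ the three-edge detour $(u_1,u_2)\to(u_1,w_2)\to(v_1,w_2)\to(v_1,u_2)$ yields $\kappa(H_2)$ further paths, internally \vertex-disjoint from the first batch and from one another. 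The only real obstacle in the argument is this last Cartesian-product lower bound; everything else follows mechanically from results already established in the paper.
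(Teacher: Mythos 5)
Your proof takes essentially the same route as the paper: identify $\normalgraph{\cbox{s}{\veb}}{\Graver(A_k)}$ as the Cartesian product $H_1\,\square\,H_2$ of two slacked box graphs, pass through Lemma~\ref{lem:SlackedGraphs} and Lemma~\ref{lem:BlockGraph} to compute each factor's degree and connectivity, and invoke a Cartesian-product connectivity bound for the lower bound (the paper relies on \cite{W.S.Chiue1999}, you on Sabidussi). One caveat about your ``self-contained substitute'': the node version of Lemma~\ref{lem:EdgeConnectivity} (Liu's criterion, as described in Section~\ref{sec:Connectivity}) requires exhibiting the requisite number of internally \vertex-disjoint paths between all pairs of \vertices\ whose neighborhoods intersect, not merely between adjacent pairs, so checking only edges $(u_1,u_2)\longleftrightarrow(v_1,u_2)$ does not by itself establish the \vertex-connectivity bound; the citation to Sabidussi carries the argument, and the substitute as written does not.
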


\begin{proof}
Using the representation of $\cbox{s}{\veb}$ in \eqref{equ:BlockRepresentation} and a projection onto the first $4k$ coordinates, we obtain that the induced subgraph of $\fibergraph{A_k}{\veb}{\Graver(A_k)}$ on the \vertices\ $\cbox{s}{\veb}$ is isomorphic to the graph
\begin{equation}\label{equ:InducedSubgraph}
	\normalgraph{\slackedbox{\vew_1+s\cdot\mathbf{1}_k}\times\slackedbox{\vew_2+(c-s)\cdot\mathbf{1}_k}}{\slackedmoves{\standardbasis{k}}\times\{\mathbf{0}_{2k}\}\cup\{\mathbf{0}_{2k}\}\times\slackedmoves{\standardbasis{k}}}.
\end{equation}
Graphs of this particular structure can be interpreted as the \emph{Cartesian product} of two related graphs, in our case here, $\normalgraph{\slackedbox{\vew_1+s\cdot\mathbf{1}_k}}{\slackedmoves{\standardbasis{k}}}$ and $\normalgraph{\slackedbox{\vew_2+(c-s)\cdot\mathbf{1}_k}}{\slackedmoves{\standardbasis{k}}}$ (we refer to \cite{W.S.Chiue1999} for a definition).
This gives that the minimal degree of this graph is the sum of the minimal degrees of $\normalgraph{\slackedbox{\vew_1+s\cdot\mathbf{1}_k}}{\slackedmoves{\standardbasis{k}}}$ and of $\normalgraph{\slackedbox{\vew_2+(c-s)\cdot\mathbf{1}_k}}{\slackedmoves{\standardbasis{k}}}$. Using the isomorphism given in Lemma~\ref{lem:SlackedGraphs}, their minimal degrees coincide with the minimal degrees of the graphs $\normalgraph{\normalbox{\vew_1+s\cdot\mathbf{1}_k}}{\standardbasis{k}}$ and $\normalgraph{\normalbox{\vew_2+(c-s)\cdot\mathbf{1}_k}}{\standardbasis{k}}$, respectively. Applying the formula of Lemma~\ref{lem:BlockGraph}, the minimal degree of the graph given in \eqref{equ:InducedSubgraph} equals
\begin{equation*}
|\supp(\vew_1+s\cdot\mathbf{1}_k)|+|\supp(\vew_2+(c-s)\cdot\mathbf{1}_k)|.
\end{equation*}
as claimed.\eoproof
\end{proof}

\par{Whereas Lemma~\ref{lem:BoxSubgraphsCoincide} states that the Gr\"obner and Graver fiber graphs coincide on the subgraph induced by $\cbox{s}{\veb}$, Lemma~\ref{lem:PathsInbox} says that the \vertex-connectivity in those subgraphs is best possible. But what about moves between two neighbouring boxes of $\fiber{A_k}{\veb}$? Let us now determine under which conditions \vertices\ of neighboring boxes are adjacent to each other. For that it is necessary that $\fiber{A_k}{\veb}$ has at least two boxes, which is precisely the case if $l(\veb)<u(\veb)$. To simplify our proofs it is reasonable to define for all choices $\vev_1,\vev_2\in\normalbox{\mathbf{1}_k}$ the following move from the Graver basis of $A_k$:
\begin{equation*}
\gr{\vev_1}{\vev_2}:=
\begin{pmatrix}
-\vev_1 \\
-\mathbf{1}_k+\vev_1 \\
\vev_2 \\
\mathbf{1}_k-\vev_2 \\
-1\\
1
\end{pmatrix}\in\slackedbox{-\mathbf{1}_k}\times\slackedbox{\mathbf{1}_k}\times\{-1\}\times\{1\}.
\end{equation*}
Choose $s\in [l(\veb),u(\veb)]$ and let 
$(\vex,\vey)^\T\in\normalbox{\vew_1+\mathbf{1}_k\cdot s}\times\normalbox{\vew_2+(c-s)\cdot\mathbf{1}_k}$ and $\vev_1,\vev_2\in\normalbox{\mathbf{1}_k}$. The following conditions on $\vev_1$ and $\vev_2$
\begin{subequations}
\begin{align}
\supp(\vev_1)\subset\supp(\vex)&\textnormal{ and } [k]\setminus\supp(\vev_1)\subset\supp(\vew_1+s\cdot\mathbf{1}_k-\vex)\label{equ:EdgePlusGraver}\\
\supp(\vev_2)\subset\supp(\vey)&\textnormal{ and } [k]\setminus\supp(\vev_2)\subset\supp(\vew_2+(c-s)\cdot\mathbf{1}_k-\vey)\label{equ:EdgeMinusGraver}
\end{align}
\end{subequations}
lead to a technical characterization for a Graver move to be applicable at $\fibervert{\vex}{\vey}{s}$:
\begin{equation*}
\begin{split}
\fibervert{\vex}{\vey}{s}\xlongleftrightarrow{\gr{\vev_1}{\vev_2}}\fibervert{\vex-\vev_1}{\vey+\vev_2}{s-1}
&\iff \eqref{equ:EdgePlusGraver}\textnormal{ and } s>l(\veb)\\
\fibervert{\vex}{\vey}{s}\xlongleftrightarrow{-\gr{\vev_1}{\vev_2}}\fibervert{\vex+\vev_1}{\vey-\vev_2}{s+1}
&\iff \eqref{equ:EdgeMinusGraver}\textnormal{ and } s<u(\veb).
\end{split}
\end{equation*}

In particular, we see that only a fraction of moves between two adjacent boxes of $\fiber{A_k}{\veb}$ are actually moves from the lexicographic Gr\"obner basis of $A_k$. So the main difference of the fiber graphs of $A_k$ with respect to Graver and Gr\"obner moves results from how the boxes $\cbox{s}{\veb}$ are connected among each other. From our observations in this section, we obtain that there is a large number of Graver moves between two neighboring boxes and we summarize this results in the following proposition.}

\begin{proposition}\label{prop:GraverDegree}
Let $\veb=(\vew_1,\vew_2,c)^\T\in\Z^{2k+1}$ with $\vew_1,\vew_2\in\Z^k$ and
$c\in\Z$ such that $\fiber{A_k}{\veb}\neq\emptyset$ and consider the fiber graph $\fibergraph{A_k}{\veb}{\Graver(A_k)}$. 
For $s\in[l(\veb),u(\veb)]$, a \vertex\ $\vev\in\cbox{s}{\veb}$ has
neighbors in $\cbox{s-1}{\veb}$ if and only if $s>l(\veb)$ and in this case that are at least
$2^k$ many. In the same way, $\vev$ has neighbors in
$\cbox{s+1}{\veb}$ if and only if $s<u(\veb)$ and that are at least
$2^k$ many in this case.
\end{proposition}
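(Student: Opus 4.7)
The plan is to count Graver moves mapping $\fibervert{\vex}{\vey}{s} \in \cbox{s}{\veb}$ into $\cbox{s-1}{\veb}$; the case of $\cbox{s+1}{\veb}$ will be handled by an entirely analogous argument. By Theorem~\ref{theorem:Graver basis of A_k}, only elements of $\pm\slackedbox{-\mathbf{1}_k} \times \slackedbox{\mathbf{1}_k} \times \{-1\} \times \{1\}$ change the $s$-coordinate, and of these, exactly the moves $\gr{\vev_1}{\vev_2}$ with $\vev_1,\vev_2 \in \normalbox{\mathbf{1}_k}$ decrease $s$ by one. If $s = l(\veb) = \|\vew_1^-\|_\infty$, then $\vew_1 + (s-1)\cdot\mathbf{1}_k$ has a strictly negative entry, so $\cbox{s-1}{\veb} = \emptyset$ and there are no such neighbors; this settles the \emph{only if} direction. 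From here on, assume $s > l(\veb)$.

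The characterization stated just before the proposition tells us that $\gr{\vev_1}{\vev_2}$ is applicable at $\fibervert{\vex}{\vey}{s}$ if and only if condition \eqref{equ:EdgePlusGraver} holds, with image $\fibervert{\vex-\vev_1}{\vey+\vev_2}{s-1}$. The crucial observation is that \eqref{equ:EdgePlusGraver} constrains only $\vev_1$: the inequalities $0 \leq \vey + \vev_2 \leq \vew_2 + (c-s)\cdot\mathbf{1}_k + \mathbf{1}_k$ hold automatically for every $\vev_2 \in \normalbox{\mathbf{1}_k}$, so $\vev_2$ is free to range over all $2^k$ elements of $\normalbox{\mathbf{1}_k}$.

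It remains to exhibit at least one admissible $\vev_1$. Rewriting \eqref{equ:EdgePlusGraver} as
\[
[k] \setminus \supp(\vew_1 + s\cdot\mathbf{1}_k - \vex) \;\subset\; \supp(\vev_1) \;\subset\; \supp(\vex),
\]
a valid $\vev_1 \in \normalbox{\mathbf{1}_k}$ exists precisely when the left set is contained in the right. For any $i$ with $\vex_i = 0$, the assumption $s > l(\veb) = \|\vew_1^-\|_\infty$ gives $(\vew_1 + s\cdot\mathbf{1}_k - \vex)_i = (\vew_1)_i + s \geq s - \|\vew_1^-\|_\infty > 0$, so $i$ belongs to $\supp(\vew_1 + s\cdot\mathbf{1}_k - \vex)$ and thus not to the left set; the required nesting follows. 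Since distinct pairs $(\vev_1,\vev_2)$ produce distinct images $\fibervert{\vex-\vev_1}{\vey+\vev_2}{s-1}$, the vertex $\fibervert{\vex}{\vey}{s}$ has at least $1 \cdot 2^k = 2^k$ neighbors in $\cbox{s-1}{\veb}$. The analogous statement for $\cbox{s+1}{\veb}$ is obtained symmetrically: the moves $-\gr{\vev_1}{\vev_2}$ govern that case, condition \eqref{equ:EdgeMinusGraver} now constrains only $\vev_2$, and $\vev_1$ plays the free role. The main subtlety, and the reason the $2^k$ bound arises (and is tight), is precisely this asymmetry between \eqref{equ:EdgePlusGraver} and \eqref{equ:EdgeMinusGraver}: in each of the two displayed equivalences in the characterization, exactly one of the two auxiliary vectors is left entirely unconstrained.
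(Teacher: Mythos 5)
Your proof is correct and takes essentially the same route as the paper: both arguments exploit the fact that condition \eqref{equ:EdgePlusGraver} constrains only $\vev_1$, leaving $\vev_2$ free to range over all $2^k$ elements of $\normalbox{\mathbf{1}_k}$; the paper simply hardwires the witness $\vev_1=\chi(\vex)$ while you establish existence via the sandwich $[k]\setminus\supp(\vew_1+s\cdot\mathbf{1}_k-\vex)\subset\supp(\vev_1)\subset\supp(\vex)$. The argument for the \emph{only if} direction and the observation about the asymmetry between \eqref{equ:EdgePlusGraver} and \eqref{equ:EdgeMinusGraver} are sound elaborations of the same idea.
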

\begin{proof}
The statement of the proposition follows immediately from the fact that moves of the form $\gr{\chi(\vex)}{\vev_2}$ are applicable at $\fibervert{\vex}{\vey}{s}$ for all $\vev_2\in\normalbox{\mathbf{1}_k}$ if $s>l(\veb)$ and in the same way we see that moves of the form $-\gr{\vev_1}{\chi(\vey)}$ are applicable at $\fibervert{\vex}{\vey}{s}$ if $s<u(\veb)$ for all $\vev_1\in\normalbox{\mathbf{1}_k}$.\eoproof
\end{proof}

\section{Gr\"obner Fiber Graphs of $A_k$}\label{sec:GroebnerFibers}

\par{As mentioned in the previous section, the number of edges between two boxes
of a fiber is significantly higher under the Graver basis than the
reduced lexicographic Gr\"obner basis and
our hope is that this affects the connectivity of the fiber graphs.
Indeed, considering the fiber of $\vece_{2k+1}\in\Z^{2k+1}$, we have that
$l(\vece_{2k+1})=0$ and $u(\vece_{2k+1})=1$. Thus,
\eqref{equ:FiberPresentation} gives
\begin{equation*}
\begin{split}
\fiber{A_k}{\vece_{2k+1}}&=\cbox{0}{\vece_{2k+1}}\cup\cbox{1}{\vece_{2k+1}}\\
&=\slackedbox{\mathbf{0}_k}\times\slackedbox{\mathbf{1}_k}\times\{0\}\times\{1\}\cup\slackedbox{\mathbf{1}_k}\times\slackedbox{\mathbf{0}_k}\times\{1\}\times\{0\}\\
&=\{\mathbf{0}_k\}\times\slackedbox{\mathbf{1}_k}\times\{0\}\times\{1\}\cup\slackedbox{\mathbf{1}_k}\times\{\mathbf{0}_k\}\times\{1\}\times\{0\}.
\end{split}
\end{equation*}
This combined with Lemma~\ref{lem:PathsInbox} implies that the minimal degree of $\fibergraph{A_k}{\vece_{2k+1}}{\GB}$ is at least $k$. Due to the connection to slacked boxes, Lemma~\ref{lem:PathsInbox} explains the structure of the fiber within a box  very well. But what about edges between two boxes with respect to Gr\"obner moves? According to Theorem~\ref{theorem:ReducedGBToricIdeal}, the only
move available is
\begin{equation*} 
\gr{\mathbf{0}_k}{\mathbf{0}_k}=
 (\mathbf{0}_k,-\mathbf{1}_k,\mathbf{0}_k,\mathbf{1}_k,\mathbf{0}_k,-1,1)^\T
\end{equation*}
 and according to Section~\ref{sec:FiberStructure}, this move can be
 applied only once in the fiber $\fiber{A_K}{\vece_{2k+1}}$, namely as move between the following \vertices:
\begin{equation*}
(\mathbf{0}_k,\mathbf{0}_k,\mathbf{0}_k,\mathbf{1}_k,0,1)^\T
\xlongleftrightarrow{\gr{\mathbf{0}_k}{\mathbf{0}_k}}
(\mathbf{0}_k,\mathbf{1}_k,\mathbf{0}_k,\mathbf{0}_k,1,0)^\T.
\end{equation*}
This means there is only a single edge connecting $\cbox{0}{\veb}$ and
$\cbox{1}{\veb}$ (see Figure~\ref{fig:Counterexample}) and hence the minimal degree of $\fibergraph{A_k}{\vece_{2k+1}}{\GB}$ equals $k$. 
\begin{figure}[tbh]
	\centering
		\begin{tikzpicture}[yscale=0.4,xscale=0.4]

 	\node[draw,circle,fill=black,xshift=-5cm,yshift=0.5cm,inner sep=0pt,minimum size=5pt] (A-0-1-0-0) at 	(0,0) {};
 	\node[draw,circle,fill=black,xshift=-5cm,yshift=0.5cm,inner sep=0pt,minimum size=5pt] (A-0-1-0-1) at 	(0,3) {};
 	\node[draw,circle,fill=black,xshift=-5cm,yshift=0.5cm,inner sep=0pt,minimum size=5pt] (A-0-1-1-0) at 	(3,0) {};
 	\node[draw,circle,fill=black,xshift=-5cm,yshift=0.5cm,inner sep=0pt,minimum size=5pt] (A-0-1-1-1) at 	(3,3) {};

   \path (A-0-1-0-0) edge[font=\small] (A-0-1-0-1);
   \path (A-0-1-0-0) edge[font=\small] (A-0-1-1-0);

   \path (A-0-1-1-0) edge[font=\small] (A-0-1-0-0);
   \path (A-0-1-1-0) edge[font=\small] (A-0-1-1-1);

   \path (A-0-1-1-1) edge[font=\small] (A-0-1-0-1);
   \path (A-0-1-1-1) edge[font=\small] (A-0-1-1-0);

 	\node[draw,circle,fill=black,xshift=-3cm,yshift=0.5cm,inner sep=0pt,minimum size=5pt] (A-1-0-0-0) at 	(0,0) {};
 	\node[draw,circle,fill=black,xshift=-3cm,yshift=0.5cm,inner sep=0pt,minimum size=5pt] (A-1-0-0-1) at 	(0,3) {};
 	\node[draw,circle,fill=black,xshift=-3cm,yshift=0.5cm,inner sep=0pt,minimum size=5pt] (A-1-0-1-0) at 	(3,0) {};
 	\node[draw,circle,fill=black,xshift=-3cm,yshift=0.5cm,inner sep=0pt,minimum size=5pt] (A-1-0-1-1) at 	(3,3) {};

   \path (A-1-0-0-0) edge[font=\small] (A-1-0-0-1);
   \path (A-1-0-0-0) edge[font=\small] (A-1-0-1-0);

   \path (A-1-0-0-1) edge[font=\small] (A-1-0-1-1);

   \path (A-1-0-1-1) edge[font=\small] (A-1-0-1-0);
   \path (A-1-0-1-1) edge[font=\small] (A-1-0-0-1);

   \path (A-0-1-1-0) edge[font=\small] (A-1-0-0-0);

 	\node[draw,circle,fill=black,inner sep=0pt,minimum size=5pt] (0-0-0-0) at 	(0,0) {};
 	\node[draw,circle,fill=black,inner sep=0pt,minimum size=5pt] (0-0-0-1) at 	(0,3) {};
 	\node[draw,circle,fill=black,inner sep=0pt,minimum size=5pt] (0-0-1-0) at 	(3,0) {};
 	\node[draw,circle,fill=black,inner sep=0pt,minimum size=5pt] (0-0-1-1) at 	(3,3) {};

 	\node[draw,circle,fill=black,xshift=0.7cm,yshift=0.5cm,inner sep=0pt,minimum size=5pt] (0-1-0-0) at 	(0,0) {};
 	\node[draw,circle,fill=black,xshift=0.7cm,yshift=0.5cm,inner sep=0pt,minimum size=5pt] (0-1-0-1) at 	(0,3) {};
 	\node[draw,circle,fill=black,xshift=0.7cm,yshift=0.5cm,inner sep=0pt,minimum size=5pt] (0-1-1-0) at 	(3,0) {};
 	\node[draw,circle,fill=black,xshift=0.7cm,yshift=0.5cm,inner sep=0pt,minimum size=5pt] (0-1-1-1) at 	(3,3) {};

   \path (0-0-0-0) edge[font=\small] (0-0-0-1);
   \path (0-0-0-0) edge[font=\small] (0-0-1-0);
   \path (0-0-0-0) edge[font=\small] (0-1-0-0);

   \path (0-0-0-1) edge[font=\small] (0-1-0-1);
   \path (0-0-0-1) edge[font=\small] (0-0-1-1);

   \path (0-1-0-0) edge[font=\small] (0-1-0-1);
   \path (0-1-0-0) edge[font=\small] (0-1-1-0);

   \path (0-1-1-0) edge[font=\small] (0-0-1-0);
   \path (0-1-1-0) edge[font=\small] (0-1-0-0);
   \path (0-1-1-0) edge[font=\small] (0-1-1-1);

   \path (0-0-1-1) edge[font=\small] (0-1-1-1);
   \path (0-0-1-1) edge[font=\small] (0-0-1-0);
   \path (0-0-1-1) edge[font=\small] (0-0-0-1);

   \path (0-1-1-1) edge[font=\small] (0-0-1-1);
   \path (0-1-1-1) edge[font=\small] (0-1-0-1);
   \path (0-1-1-1) edge[font=\small] (0-1-1-0);

 	\node[draw,circle,fill=black,xshift=2.4cm,yshift=0cm,inner sep=0pt,minimum size=5pt] (1-0-0-0) at 	(0,0) {};
 	\node[draw,circle,fill=black,xshift=2.4cm,yshift=0cm,inner sep=0pt,minimum size=5pt] (1-0-0-1) at 	(0,3) {};
 	\node[draw,circle,fill=black,xshift=2.4cm,yshift=0cm,inner sep=0pt,minimum size=5pt] (1-0-1-0) at 	(3,0) {};
 	\node[draw,circle,fill=black,xshift=2.4cm,yshift=0cm,inner sep=0pt,minimum size=5pt] (1-0-1-1) at 	(3,3) {};

 	\node[draw,circle,fill=black,xshift=3.1cm,yshift=0.5cm,inner sep=0pt,minimum size=5pt] (1-1-0-0) at 	(0,0) {};
 	\node[draw,circle,fill=black,xshift=3.1cm,yshift=0.5cm,inner sep=0pt,minimum size=5pt] (1-1-0-1) at 	(0,3) {};
 	\node[draw,circle,fill=black,xshift=3.1cm,yshift=0.5cm,inner sep=0pt,minimum size=5pt] (1-1-1-0) at 	(3,0) {};
 	\node[draw,circle,fill=black,xshift=3.1cm,yshift=0.5cm,inner sep=0pt,minimum size=5pt] (1-1-1-1) at 	(3,3) {};

   \path (1-0-0-0) edge[font=\small] (1-0-0-1);
   \path (1-0-0-0) edge[font=\small] (1-0-1-0);
   \path (1-0-0-0) edge[font=\small] (1-1-0-0);

   \path (1-0-0-1) edge[font=\small] (1-1-0-1);
   \path (1-0-0-1) edge[font=\small] (1-0-1-1);

   \path (1-1-0-0) edge[font=\small] (1-1-0-1);
   \path (1-1-0-0) edge[font=\small] (1-1-1-0);

   \path (1-1-1-0) edge[font=\small] (1-0-1-0);
   \path (1-1-1-0) edge[font=\small] (1-1-0-0);
   \path (1-1-1-0) edge[font=\small] (1-1-1-1);

   \path (1-0-1-1) edge[font=\small] (1-1-1-1);
   \path (1-0-1-1) edge[font=\small] (1-0-1-0);
   \path (1-0-1-1) edge[font=\small] (1-0-0-1);

   \path (1-1-1-1) edge[font=\small] (1-0-1-1);
   \path (1-1-1-1) edge[font=\small] (1-1-0-1);
   \path (1-1-1-1) edge[font=\small] (1-1-1-0);

   \path (0-0-1-0) edge[font=\small] (1-0-0-0);

\end{tikzpicture}
		\caption{A sketch of $\cbox{0}{\vece_{2k+1}}$ and
		$\cbox{1}{\vece_{2k+1}}$  for $k=2$ and $k=3$ with respect to Gr\"obner moves.}\label{fig:Counterexample}
\end{figure}
Thus, removing this edge gives a non-connected graph, i.e., the edge-connectivity of the fiber graph equals $1$. Since in all graphs the \vertex-connectivity is always less than the
edge-connectivity, we obtain the following corollary.}

\begin{corollary}\label{cor:CEconj1}
For $k>0$, the edge-connectivity of the fiber graph
$$\fibergraph{A_k}{\vece_{2k+1}}{\GB}$$ equals $1$, whereas its
minimal degree equals $k$. In particular, $A_k$ gives a
counterexample to Conjecture~\ref{conj:Engstroem:1} for $k\ge 2$.
\end{corollary}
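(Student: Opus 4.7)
The plan is to treat this as essentially a bookkeeping corollary of the structural results just established in the preceding paragraph, and to package the two claims (minimal degree equals $k$, edge-connectivity equals $1$) as independent verifications that both follow from the explicit description of $\fiber{A_k}{\vece_{2k+1}}$ together with Theorem~\ref{theorem:ReducedGBToricIdeal}.

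First I would compute the lower and upper bounds $l(\vece_{2k+1})=0$ and $u(\vece_{2k+1})=1$, so that the fiber decomposition \eqref{equ:FiberPresentation} yields exactly two boxes $\cbox{0}{\vece_{2k+1}}$ and $\cbox{1}{\vece_{2k+1}}$. Applying Lemma~\ref{lem:PathsInbox} with $\vew_1=\vew_2=\mathbf{0}_k$ and $c=1$ shows that the induced subgraph on each box has minimal degree equal to $|\supp(\mathbf{0}_k)|+|\supp(\mathbf{1}_k)|=k$, and by Lemma~\ref{lem:BoxSubgraphsCoincide} this value is the same whether we use Graver or reduced lexicographic Gröbner moves. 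Combining this with the count of inter-box Gröbner edges below will pin down the minimal degree of the full fiber graph.

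Next, I would enumerate which Gröbner moves from Theorem~\ref{theorem:ReducedGBToricIdeal} can act between $\cbox{0}{\vece_{2k+1}}$ and $\cbox{1}{\vece_{2k+1}}$. The standard-basis moves in \eqref{equ:GroebnerBasis} have zero in the last two coordinates and therefore preserve the box, so only the move $\gr{\mathbf{0}_k}{\mathbf{0}_k}=(\mathbf{0}_k,-\mathbf{1}_k,\mathbf{0}_k,\mathbf{1}_k,-1,1)^{\T}$ can cross between boxes. Using the representation \eqref{eq:fibervert} and the applicability criteria \eqref{equ:EdgePlusGraver}--\eqref{equ:EdgeMinusGraver} with $\vev_1=\vev_2=\mathbf{0}_k$, this move is applicable precisely at the node $(\mathbf{0}_k,\mathbf{0}_k,\mathbf{0}_k,\mathbf{1}_k,0,1)^{\T}\in\cbox{0}{\vece_{2k+1}}$, yielding its unique inter-box neighbor $(\mathbf{0}_k,\mathbf{1}_k,\mathbf{0}_k,\mathbf{0}_k,1,0)^{\T}\in\cbox{1}{\vece_{2k+1}}$.

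So there is exactly one edge bridging the two boxes. Combined with the previous paragraph, this shows that every vertex has degree at least $k$ (minimal degree $=k$ is witnessed inside each box), while deleting this single bridge edge disconnects the graph, giving $\lambda(\fibergraph{A_k}{\vece_{2k+1}}{\GB})=1$. The inequality $\kappa\le\lambda$ then forces $\kappa=1$ as well, so for $k\ge 2$ the vertex-connectivity strictly undercuts the minimal degree, refuting Conjecture~\ref{conj:Engstroem:1}. There is no real obstacle here — all the work has been done in Sections~\ref{sec:Bases} and~\ref{sec:FiberStructure}; the only thing to be careful about is making sure that every other Gröbner generator really does fix the $s$-coordinate, which is immediate from the explicit description in Theorem~\ref{theorem:ReducedGBToricIdeal}.
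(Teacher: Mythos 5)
Your proposal is correct and follows essentially the same route as the paper: compute $l(\vece_{2k+1})=0$, $u(\vece_{2k+1})=1$ to get the two-box decomposition, apply Lemma~\ref{lem:PathsInbox} (via Lemma~\ref{lem:BoxSubgraphsCoincide}) to get minimal degree $k$ inside each box, observe that the only Gr\"obner generator with nonzero last coordinates is $\gr{\mathbf{0}_k}{\mathbf{0}_k}$ and that it yields exactly one bridge edge, and conclude $\lambda=1$ by deleting that bridge. The only cosmetic difference is that you write the unique cross-box move correctly as a vector in $\Z^{4k+2}$, whereas the paper's display of that vector contains a typographical extra $\mathbf{0}_k$ block.
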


\par{However, a priori $A_k$ does not provide evidence against Conjecture~\ref{conj:Engstroem:2} since
the conjecture claims that the \vertex-connectivity equals the minimal degree
only for sufficiently large right-hand sides. But Theorem~\ref{thm:IndependenceofMarginSize}
gives us an instruction how to modify $A_k$ such that it becomes a
counterexample to Conjecture~\ref{conj:Engstroem:2} as well:

\begin{equation*}
 B_k:=\begin{pmatrix} A_{k+1} & I_{2k+1} \\ \mathbf{0} &
I_{2k+1}\end{pmatrix}\in\Z^{(6k+3)\times(4k+2)}.
 \end{equation*} 
 }
\begin{corollary}
For $k>0$, there exists a term ordering $\prec_k$ on $\Zge^{6k+3}$ such
that for all $N\in\Zge$ there exists $\veb\in\Z^{4k+2}$ with $\veb\ge N\cdot\mathbf{1}_{4k+2}$
such that the edge-connectivity of
$\fibergraph{B_k}{\veb}{\mathcal{R}_{\prec_k}(B_k)}$ equals $1$
whereas its minimal degree equals $k$. In particular, $B_k$ gives a
counterexample to Conjecture~\ref{conj:Engstroem:2} for $k\ge 2$.
\end{corollary}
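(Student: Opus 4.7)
The plan is to apply the Universality Theorem (Theorem~\ref{thm:IndependenceofMarginSize}) directly to the concrete counterexample from Corollary~\ref{cor:CEconj1}. Since the fiber graph $\fibergraph{A_k}{\vece_{2k+1}}{\GB}$ already has edge-connectivity $1$ and minimal degree $k$, and since both of these quantities are invariants of the isomorphism class of a graph, they transfer verbatim to any isomorphic fiber graph of the enlarged matrix $B_k$; the point of Theorem~\ref{thm:IndependenceofMarginSize} is precisely that such isomorphic fibers of $B_k$ exist with componentwise arbitrarily large right-hand sides.

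More concretely, I would instantiate the construction from the proof of Theorem~\ref{thm:IndependenceofMarginSize} with input data $A=A_k$, term ordering $\lex$, and right-hand side $\veb=\vece_{2k+1}$. This produces the matrix $B_k$, an extension $\prec_k$ of $\lex$ to $\Z^{6k+3}$ whose reduced Gr\"obner basis $\mathcal{R}_{\prec_k}(B_k)$ corresponds bijectively to $\mathcal{R}_{\lex}(A_k)$, and for every $N\in\Zge$ a right-hand side $\veb'(N)\in\Z^{4k+2}$ with $\veb'(N)\ge N\cdot\mathbf{1}_{4k+2}$ together with a graph isomorphism
\begin{equation*}
\fibergraph{B_k}{\veb'(N)}{\mathcal{R}_{\prec_k}(B_k)}\cong\fibergraph{A_k}{\vece_{2k+1}}{\GB}.
\end{equation*}

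Since isomorphic graphs share the same edge-connectivity and the same minimal degree, Corollary~\ref{cor:CEconj1} immediately forces the left-hand side above to have edge-connectivity $1$ and minimal degree $k$. Setting $\veb:=\veb'(N)$ thus exhibits, for every prescribed bound $N\in\Zge$, a fiber of $B_k$ whose right-hand side componentwise exceeds $N$ and whose \vertex-connectivity (bounded above by the edge-connectivity) is strictly smaller than its minimal degree whenever $k\ge 2$, contradicting Conjecture~\ref{conj:Engstroem:2}. There is essentially no technical obstacle: the hard combinatorial work was already carried out in Corollary~\ref{cor:CEconj1}, while the componentwise lower bound $\veb'(N)\ge N\cdot\mathbf{1}_{4k+2}$ is literally built into the definition of $\veb'(N)$ given in the proof of Theorem~\ref{thm:IndependenceofMarginSize}, so no further analysis is required.
\eoproof
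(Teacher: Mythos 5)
Your proof is correct and is exactly the argument the paper has in mind: apply Theorem~\ref{thm:IndependenceofMarginSize} to $(A_k, \lex, \vece_{2k+1})$, transport the edge-connectivity $1$ and minimal degree $k$ from Corollary~\ref{cor:CEconj1} along the resulting graph isomorphism, and invoke $\kappa\le\lambda$. Note that you have tacitly (and rightly) corrected a typo in the paper's displayed definition of $B_k$: the block there reads $A_{k+1}$, which is dimensionally incompatible with the adjacent $I_{2k+1}$ blocks and with the corollary's claim that the minimal degree is $k$, $\prec_k$ lives on $\Zge^{6k+3}$, and $\veb\in\Z^{4k+2}$; all of these are only consistent with building $B_k$ from $A_k$ as you do.
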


\section{Graver Fiber Graphs of $A_k$}\label{sec:GraverFiber}

\par{As shown in the last section, \vertex-connectivity and even edge-connectivity fail to be best possible in general in Gr\"obner fiber graphs. As the number of moves in the Graver basis enlarge the number of moves in a Gr\"obner basis by far, we hope that this circumstance reflects positively onto the connectivity of those fiber graphs. So let us now investigate how the situation looks like if we replace Gr\"obner moves with Graver moves. We prove that even if the edge-connectivity of some Gr\"obner fiber graphs of $A_k$ is rather bad, the
edge-connectivity of its Graver fiber graphs is best possible. With Proposition~\ref{prop:GraverDegree} in mind, let us first determine the minimal degree of the Graver fiber graphs.}
\begin{proposition}[Minimal degree]\label{prop:MinDeg}
Let $\veb=(\vew_1,\vew_2,c)^\T\in\Z^{2k+1}$ with $\vew_1,\vew_2\in\Z^k$ and
$c\in\Z$. If $l(\veb)=u(\veb)$, then we have
\begin{equation}\label{equ:MinDegbox}
\delta(\fibergraph{A_k}{\veb}{\Graver(A_k)})=|\supp(\vew_1+\|\vew_1^-\|_\infty\cdot\mathbf{1}_k)|+|\supp(\vew_2+\|\vew_2^-\|_\infty\cdot\mathbf{1}_k)|.
\end{equation}
Otherwise, if $l(\veb)<u(\veb)$, then we have
\begin{equation}\label{equ:MinDegNeiborboxes}
\delta(\fibergraph{A_k}{\veb}{\Graver(A_k)})=\min_{j\in\{1,2\}}\{|\supp(\vew_j+\|\vew_j^-\|_\infty\cdot\mathbf{1}_k)|\}+k+2^k.
\end{equation}
\end{proposition}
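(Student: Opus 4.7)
The argument splits along the stated dichotomy, and in both cases I would bound every vertex's degree from below and then exhibit a vertex attaining the bound. When $l(\veb)=u(\veb)$, the decomposition \eqref{equ:FiberPresentation} collapses to a single box $\cbox{l(\veb)}{\veb}$, so only within-box standard-basis moves are available. Since $l(\veb)=\|\vew_1^-\|_\infty$ and $c-l(\veb)=\|\vew_2^-\|_\infty$, Lemma~\ref{lem:PathsInbox} applied to this box directly yields \eqref{equ:MinDegbox}.

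When $l(\veb)<u(\veb)$, I would fix an arbitrary vertex $\fibervert{\vex}{\vey}{s}$ and split its degree into in-box neighbors (controlled by Lemma~\ref{lem:PathsInbox}) and, when they exist, neighbors in $\cbox{s-1}{\veb}$ and $\cbox{s+1}{\veb}$ (each lower-bounded by $2^k$ via Proposition~\ref{prop:GraverDegree}). For $l(\veb)<s<u(\veb)$ both $\vew_j+s\cdot\mathbf{1}_k$ have full support $k$, so the in-box minimum is $2k$ and the total degree is at least $2k+2\cdot 2^k$; for $s=l(\veb)$ the slack $\vew_2+(c-l(\veb))\cdot\mathbf{1}_k$ is strictly positive while $\vew_1+l(\veb)\cdot\mathbf{1}_k$ need not be, so the in-box minimum equals $|\supp(\vew_1+\|\vew_1^-\|_\infty\mathbf{1}_k)|+k$, giving a total lower bound of $|\supp(\vew_1+\|\vew_1^-\|_\infty\mathbf{1}_k)|+k+2^k$; and symmetrically for $s=u(\veb)$. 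Since $|\supp(\cdot)|\le k$, the two boundary bounds are each at most $2k+2^k$, strictly smaller than the interior bound, so the minimum degree of the whole fiber graph is at least $\min_{j\in\{1,2\}}|\supp(\vew_j+\|\vew_j^-\|_\infty\mathbf{1}_k)|+k+2^k$.

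To certify equality rather than a mere inequality, I would exhibit a corner vertex attaining this value. For $s=l(\veb)$ I take $\fibervert{\mathbf{0}_k}{\mathbf{0}_k}{l(\veb)}$, which lies in the fiber since both $\vew_1+l(\veb)\cdot\mathbf{1}_k$ and $\vew_2+(c-l(\veb))\cdot\mathbf{1}_k$ are non-negative. Its in-box degree is exactly $|\supp(\vew_1+\|\vew_1^-\|_\infty\mathbf{1}_k)|+k$ (the minimum of Lemma~\ref{lem:PathsInbox}, attained at the product of corner vertices). For its upward neighbors, condition \eqref{equ:EdgeMinusGraver} forces $\supp(\vev_2)\subset\supp(\mathbf{0}_k)=\emptyset$ and hence $\vev_2=\mathbf{0}_k$, while $\vev_1$ can be chosen freely in $\normalbox{\mathbf{1}_k}$ since $\mathbf{0}_k+\vev_1\le\vew_1+(l(\veb)+1)\cdot\mathbf{1}_k$ holds automatically. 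This produces exactly $2^k$ upward moves; a mirror construction $\fibervert{\mathbf{0}_k}{\mathbf{0}_k}{u(\veb)}$ handles the other endpoint, and taking the minimum over the two choices yields \eqref{equ:MinDegNeiborboxes}. The main subtlety is precisely this tightness step: Proposition~\ref{prop:GraverDegree} only guarantees at least $2^k$ cross-box neighbors, so one has to read \eqref{equ:EdgePlusGraver} and \eqref{equ:EdgeMinusGraver} explicitly to see that at the chosen corner vertex one half of each between-box move is completely pinned down, cutting the count to precisely $2^k$ rather than leaving it potentially larger.
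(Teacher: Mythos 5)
Your proof is correct and follows the same overall route as the paper's: lower-bound every vertex's degree by combining Lemma~\ref{lem:PathsInbox} (in-box neighbors) with Proposition~\ref{prop:GraverDegree} (cross-box neighbors), observe that interior boxes give a strictly larger bound, and then exhibit a corner vertex in a boundary box attaining the minimum. You are, however, somewhat more careful on the tightness step: the paper simply names the two candidate corner vertices in $\cbox{l(\veb)}{\veb}$ and $\cbox{u(\veb)}{\veb}$ and asserts one of them has the smallest degree, without actually checking that the cross-box contribution at these vertices is exactly (and not merely at least) $2^k$. You close that gap by reading conditions \eqref{equ:EdgePlusGraver} and \eqref{equ:EdgeMinusGraver} at the corner $\fibervert{\mathbf{0}_k}{\mathbf{0}_k}{l(\veb)}$: there $\vev_2$ is pinned to $\mathbf{0}_k$ while $\vev_1$ ranges freely over $\normalbox{\mathbf{1}_k}$, giving exactly $2^k$ upward neighbors and none downward. (You use the corner with $\vex=\vey=\mathbf{0}_k$ where the paper uses the opposite corner with both slacks zero; both are corners of the slacked box so the in-box degree is the minimum from Lemma~\ref{lem:PathsInbox} either way, and at the paper's corner the same explicit count would force $\vev_2=\mathbf{1}_k$ instead.)
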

\begin{proof}
If $l(\veb)=s=u(\veb)$, the first statement is a reformulation of Lemma~\ref{lem:PathsInbox} due to $\fiber{A_k}{\veb}=\cbox{s}{\veb}$.
So assume that we have $l(\veb)<u(\veb)$. Since $s\in[l(\veb),u(\veb)]$, we must have either $s>l(\veb)$ or $s<u(\veb)$ and hence we have either $s>\|\vew_1^-\|_\infty$ of $c-s>\|\vew_2^-\|_\infty$. 
Putting those inequalities into the equation for the minimal degree in Lemma~\ref{lem:PathsInbox}, we obtain that a \vertex\ in $\cbox{s}{\veb}$ has at least
\begin{equation*}
\min_{j\in\{1,2\}}\{|\supp(\vew_j+\|\vew_j^-\|_\infty\cdot\mathbf{1}_k)|\}+k
\end{equation*}
neighbors in his own box $\cbox{s}{\veb}$. Furthermore, due to Proposition~\ref{prop:GraverDegree} and since either $s>l(\veb)$ or
$s<u(\veb)$, a \vertex\ in $\cbox{s}{\veb}$ has either at least $2^k$ neighbors in
$\cbox{s-1}{\veb}$ or at least $2^k$ neighbors in $\cbox{s+1}{\veb}$. This shows
that the minimal degree of $\fibergraph{A_k}{\veb}{\Graver(A_k)}$ is greater or equal than the
right-hand side of the term given in \eqref{equ:MinDegNeiborboxes}. Clearly, the
\vertex\ with minimal degree has to be either in $\cbox{l(\veb)}{\veb}$ or in
$\cbox{u(\veb)}{\veb}$. Thus, either
\begin{equation*}
\begin{pmatrix}
\vew_1+l(\veb)\cdot\mathbf{1}_k\\
\mathbf{0}_k \\
\vew_2+(c-l(\veb))\cdot\mathbf{1}_k\\
\mathbf{0}_k \\
l(\veb)\\
c-l(\veb)
\end{pmatrix}\in\cbox{l(\veb)}{\veb}\textnormal{ or }
\begin{pmatrix}
\vew_1+u(\veb)\cdot\mathbf{1}_k\\
\mathbf{0}_k \\
\vew_2+(c-u(\veb))\cdot\mathbf{1}_k\\
\mathbf{0}_k \\
u(\veb)\\
c-u(\veb)
\end{pmatrix}\in\cbox{u(\veb)}{\veb}
\end{equation*}
has the smallest degree in $\fibergraph{A_k}{\veb}{\Graver(A_k)}$.
\eoproof
\end{proof}

\par{With an explicit formula for the minimal degree of
$\fibergraph{A_k}{\veb}{\Graver(A_k)}$ in mind we can determine the
edge-connectivity of those fiber graphs explicitly. 
First, we consider edges between two neighboring boxes and we show that we find a suitable number of disjoint paths connecting their end-points. 
Please note that we make these paths even \vertex-disjoint in this case.}
\begin{lemma}[Edges within Box]\label{lemma:ReplacingEdgesinbox}
Let $\veb\in\Z^{2k+1}$ and $s\in[l(\veb),u(\veb)]$. Then for any two
adjacent \vertices\ in $\cbox{s}{\veb}$ there exist
$\delta(\fibergraph{A_k}{\veb}{\Graver(A_k)})$ many \vertex-disjoint paths in
$\fibergraph{A_k}{\veb}{\Graver(A_k)}$ connecting them.
\end{lemma}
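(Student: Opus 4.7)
The proof distinguishes two cases according to whether $l(\veb)=u(\veb)$ or $l(\veb)<u(\veb)$.

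\textbf{Case 1} ($l(\veb)=u(\veb)=s$). Equation \eqref{equ:FiberPresentation} reduces the fiber to the single box $\cbox{s}{\veb}$, so the whole fiber graph equals the induced subgraph on $\cbox{s}{\veb}$. Lemma~\ref{lem:PathsInbox} together with the node-version of Menger's theorem produces $|\supp(\vew_1+s\mathbf{1}_k)|+|\supp(\vew_2+(c-s)\mathbf{1}_k)|$ pairwise vertex-disjoint paths between any two (in particular any two adjacent) vertices, and this number equals $\delta$ by \eqref{equ:MinDegbox}.

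\textbf{Case 2} ($l(\veb)<u(\veb)$). WLOG write $v_1=\fibervert{\vex}{\vey}{s}$, $v_2=\fibervert{\vex+\vece_i}{\vey}{s}$ (the $\vey$-move case is completely symmetric), and assume $s<u(\veb)$ (otherwise work with $\cbox{s-1}{\veb}$ by symmetry). A first batch of paths is supplied by applying Lemma~\ref{lem:PathsInbox} and Menger inside $\cbox{s}{\veb}$: this yields $|\supp(\vew_1+s\mathbf{1}_k)|+|\supp(\vew_2+(c-s)\mathbf{1}_k)|$ pairwise vertex-disjoint paths between $v_1$ and $v_2$ that lie entirely in $\cbox{s}{\veb}$. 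Comparing with \eqref{equ:MinDegNeiborboxes} one checks that at most $2^k$ further vertex-disjoint paths are needed, and these will be constructed through the adjacent box $\cbox{s+1}{\veb}$.

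By the proof of Proposition~\ref{prop:GraverDegree}, for each $\vev_1\in\normalbox{\mathbf{1}_k}$ the vertex $A_{\vev_1}:=\fibervert{\vex+\vev_1}{\vey-\chi(\vey)}{s+1}$ is a neighbor of $v_1$ and $B_{\vev_1}:=\fibervert{\vex+\vece_i+\vev_1}{\vey-\chi(\vey)}{s+1}$ is a neighbor of $v_2$ in $\cbox{s+1}{\veb}$, and $A_{\vev_1}$ and $B_{\vev_1}$ are themselves joined by a standard $\vece_i$-edge inside $\cbox{s+1}{\veb}$. The chief obstacle is that the natural family of length-three candidate paths $v_1\to A_{\vev_1}\to B_{\vev_1}\to v_2$ is not pairwise vertex-disjoint, because whenever $(\vev_1)_i=1$ one has $A_{\vev_1}=B_{\vev_1-\vece_i}$. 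My plan is to turn this coincidence into an advantage: for each of the $2^{k-1}$ indices $\vev_1$ with $(\vev_1)_i=1$ take the length-two shortcut $v_1\to A_{\vev_1}\to v_2$ (one path per shared vertex), and for each of the $2^{k-1}$ indices with $(\vev_1)_i=0$ connect the only-$v_1$ neighbor $A_{\vev_1}$ to the only-$v_2$ neighbor $B_{\vev_1+\vece_i}$ by a longer detour inside $\cbox{s+1}{\veb}$ that avoids all shared vertices.

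The proposed detour is a $\vey$-shift: a step $+\vece_j$ in the $\vey$-coordinate (for a coordinate $j$ whose slack makes $\vey-\chi(\vey)+\vece_j$ still admissible in $\normalbox{\vew_2+(c-s-1)\mathbf{1}_k}$), then the two required $\vece_i$-steps in the $\vex$-coordinate at this shifted $\vey$-level, and finally a returning $-\vece_j$-step. The detour interiors all sit at $\vey$-coordinate $\vey-\chi(\vey)+\vece_j$ while the shortcut paths use $\vey-\chi(\vey)$, so the two families cannot clash; pairwise disjointness within each family follows immediately from the $\vex$-coordinate bookkeeping, and disjointness from the within-box paths is automatic because all $2^k$ of these paths have internal vertices lying in $\cbox{s+1}{\veb}$. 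The technically hardest step is to show that a valid detour coordinate $j$ always exists: this requires a small case analysis at the boundary $s+1=u(\veb)$, where the $\vey$-slack $\vew_2+(c-s-1)\mathbf{1}_k-(\vey-\chi(\vey))$ can vanish on some coordinates and a fallback detour (for instance a $-\vece_j$-step) must be supplied instead.
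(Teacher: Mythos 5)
You correctly spotted a real flaw in the naive construction: the length-three candidate paths $v_1\to A_{\vev_1}\to B_{\vev_1}\to v_2$ through $\cbox{s+1}{\veb}$ are \emph{not} pairwise vertex-disjoint, since $A_{\vev_1+\vece_i}=B_{\vev_1}$. The paper's own $s<u(\veb)$ branch has exactly this defect: read closely, that branch only establishes edge-disjointness, which is all Theorem~\ref{thm:GraverConnectivity} ultimately uses; the ``vertex-disjoint'' wording in the lemma statement and in the final sentence of the proof over-claims what the $s=l(\veb)$ sub-case actually delivers.

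Nevertheless your fix does not close the gap, for two reasons. First, the reduction ``assume $s<u(\veb)$, otherwise work with $\cbox{s-1}{\veb}$ by symmetry'' misses the key asymmetry the paper exploits. Going \emph{down} from $\cbox{s}{\veb}$ via $\gr{\vev_1}{\vev}$, the $\vey$-increment $\vev$ is completely unconstrained (the inequalities $\mathbf{0}_k\le\vey+\vev\le\vew_2+(c-s+1)\mathbf{1}_k$ hold automatically), so one can fix $\vev_1$ and $\vev_1'$ once to absorb the $\vece_j$-shift and let $\vev$ range over all of $\normalbox{\mathbf{1}_k}$; the resulting $2^k$ length-two paths each have a single internal vertex $\fibervert{\vex-\vev_1}{\vey+\vev}{s-1}$, pairwise distinct because they differ in the $\vey$-coordinate, so vertex-disjointness is immediate. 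This is the paper's construction for $s>l(\veb)$, and it should be the primary case; the fragile ``up'' direction is needed only on the boundary $s=l(\veb)$. By inverting the preference you force yourself to solve the hard sub-case every time.

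Second, the detour coordinate you need in the ``up'' case does not always exist, and your fallback does not repair this. When $s+1=u(\veb)$ one can have $\vew_2+(c-s-1)\mathbf{1}_k=\mathbf{0}_k$ (for instance $\vew_2=\mathbf{0}_k$ and $s+1=c$), in which case the $\vey$-slacked box at level $s+1$ degenerates to a single point and neither a $+\vece_j$ nor a $-\vece_j$ detour step is available at all. In such boundary situations a genuinely vertex-disjoint family of extra paths would have to leave $\cbox{s+1}{\veb}$ and re-enter $\cbox{s}{\veb}$ through previously unused vertices, which is a substantially more involved construction than a $\vey$-shift and is not carried out in the paper either --- the paper simply settles for edge-disjointness there. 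The cleanest way to complete a proof along your lines is to handle $s>l(\veb)$ via the paper's two-step downward paths, and for the remaining case $s=l(\veb)$ either (i) weaken the claim to edge-disjointness, which is what the downstream argument actually needs and which the three-step paths do provide, or (ii) supply the boundary construction that the paper omits.
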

\begin{proof}
We write $\veb=(\vew_1,\vew_2,c)^\T\in\Z^{2k+1}$ with $\vew_1,\vew_2\in\Z^k$ and
$c\in\Z$. Since we have $\fiber{A_k}{\veb}\neq\emptyset$ by assumption, we must
have $l(\veb)\le u(\veb)$. 
Due to Lemma~\ref{lem:PathsInbox} and Proposition~\ref{prop:MinDeg} there is nothing to show for $l(\veb)=u(\veb)$ and hence we assume that $l(\veb)<u(\veb)$. 
Without restricting generality, the two adjacent \vertices\ we need to connect with a sufficient number of \vertex-disjoint paths look like
\begin{equation}\label{equ:StartEdgeWithinBox}
\fibervert{\vex}{\vey}{s}\longleftrightarrow\fibervert{\vex+\vece_j}{\vey}{s}
\end{equation}
with $j\in[n]$, $s\in[l(\veb),u(\veb)]$, $\vex\in\normalbox{\vew_1+s\cdot\mathbf{1}_k}$, and $\vey\in\normalbox{\vew_2+(c-s)\cdot\mathbf{1}_k}$.
By Lemma~\ref{lem:PathsInbox} we find $|\supp(\vew_1+s\cdot\mathbf{1}_k)|+|\supp(\vew_2+(c-s)\cdot\mathbf{1}_k)|$
\vertex-disjoint paths connecting $\fibervert{\vex}{\vey}{s}$ and $\fibervert{\vex+\vece_j}{\vey}{s}$ which only use \vertices\ in $\cbox{s}{\veb}$. If we have $s>l(\veb)$, we define
\begin{equation*}
\vev_1:=
\begin{cases}
\chi(\vex)-\vece_j,&\textnormal{ if } \vex_j>0 \\
\chi(\vex),&\textnormal{ if } \vex_j=0 \\
\end{cases}\textnormal{ and }\vev_1':=
\begin{cases}
\chi(\vex),&\textnormal{ if } \vex_j>0 \\
\chi(\vex)+\vece_j,&\textnormal{ if } \vex_j=0 \\
\end{cases}.
\end{equation*}
Then we have $\vev_1\in\normalbox{\mathbf{1}_k}$ and $\vev_1'\in\normalbox{\mathbf{1}_k}$. Since we have by \eqref{equ:StartEdgeWithinBox} that $\vex+\vece_j\le\vew_1+s\cdot\mathbf{1}_k$, it is easy to see that $\vev_1$ fulfills \eqref{equ:EdgePlusGraver} and hence the Graver move $\gr{\vev_1}{\vev}$ is applicable at $\fibervert{\vex}{\vey}{s}$ for every $\vev\in\normalbox{\mathbf{1}_k}$. As $\vex-\vev_1+\vev_1'=\vex+\vece_j$ by construction, this gives for every $\vev\in\normalbox{\mathbf{1}_k}$ a path 
\begin{equation*}
\begin{split}
&\fibervert{\vex}{\vey}{s}\\
\longleftrightarrow&\fibervert{\vex-\vev_1}{\vey+\vev}{s-1}\in\cbox{s-1}{\veb}\\
\longleftrightarrow&\fibervert{\vex-\vev_1+\vev_1'}{\vey+\vev_2-\vev}{s-1+1}=\fibervert{\vex+\vece_j}{\vey}{s}
\end{split}
\end{equation*}
which only uses edges with end-points $\cbox{s}{\veb}$ and $\cbox{s-1}{\veb}$. On the other hand, if we
have $s<u(\veb)$, we have for every $\vev\in\normalbox{\mathbf{1}_k}$ a path
\begin{equation*}
\begin{split}
&\fibervert{\vex}{\vey}{s}\\
\longleftrightarrow&\fibervert{\vex+\vev}{\vey-\chi(\vey)}{s+1}\in\cbox{s+1}{\veb}\\
\longleftrightarrow&\fibervert{\vex+\vev+\vece_j}{\vey-\chi(\vey)}{s+1}\in\cbox{s+1}{\veb}\\
\longleftrightarrow&\fibervert{\vex+\vev+\vece_j-\vev}{\vey-\chi(\vey)+\chi(\vey)}{s+1-1}=\fibervert{\vex+\vece_j}{\vey}{s}.
\end{split}
\end{equation*}
Here, the second edge is feasible since $j\in\supp(\vew_1+s\cdot\mathbf{1}_k-\vex)$ by assumption \eqref{equ:StartEdgeWithinBox} and hence we have for the slack variable of $\vex+\vev$ that
$$j\in\supp\left(\vew_1+(s+1)\cdot\mathbf{1}_k-(\vex+\vev)\right).$$
All in all, we get in any case $2^k$ many edge-disjoint paths which only use edges
outside of $\cbox{s}{\veb}$ and hence these paths are \vertex-disjoint to those
walking within $\cbox{s}{\veb}$. Thus, there are
\begin{equation*}
|\supp(\vew_1+s\cdot\mathbf{1}_k)|+|\supp(\vew_2+(c-s)\cdot\mathbf{1}_k)|+2^k\ge\delta(\fibergraph{A_k}{\veb}{\Graver(A_k)})
\end{equation*}
\vertex-disjoint paths between the end-points of the edge given in \eqref{equ:StartEdgeWithinBox}.
\eoproof
\end{proof}
\par{In the next lemma we prove that we can find a suitable number of paths even for end-points of edges in neighbouring boxes of $\fiber{A_k}{\veb}$ as well. Here, Proposition~\ref{prop:CombiningGraphs} plays an important role and hence we shortly recall its statement: given two subgraphs with a certain connectivity yield a lower bound on the connectivity of the induced graph on the union of those subgraphs if we can prove the existence of a suitable number of paths walking between them.
In the situation of Proposition~\ref{lem:EdgesBetweenboxes}, the subgraphs whose connectivity is already known are the induced subgraphs on the boxes $\cbox{s}{\veb}$. So the idea behind the proof of Lemma~\ref{lem:EdgesBetweenboxes} is to find a sufficient number of edges between two neighbouring boxes.}

\begin{lemma}[Edges between adjacent Boxes]\label{lem:EdgesBetweenboxes}
Let $k>0$ and $\veb\in\Z^{2k+1}$. Then for any adjacent \vertices\ in
different boxes there are $\delta(\fibergraph{A_k}{\veb}{\Graver(A_k)})$ many edge-disjoint paths
connecting them.
\end{lemma}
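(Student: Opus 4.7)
The plan is to apply Proposition~\ref{prop:CombiningGraphs} with $V_1 := \cbox{s-1}{\veb}$ and $V_2 := \cbox{s}{\veb}$, where $s \in (l(\veb), u(\veb)]$ is the unique index such that the given edge runs from $V_1$ to $V_2$. I write the two end-points as $v_1 = \fibervert{\vex}{\vey}{s-1}$ and $v_2 = \fibervert{\vex+\vev_1^*}{\vey-\vev_2^*}{s}$, so that the edge corresponds to the Graver move $-\gr{\vev_1^*}{\vev_2^*}$. Using the abbreviation $p := \min_{j\in\{1,2\}} |\supp(\vew_j+\|\vew_j^-\|_\infty\cdot\mathbf{1}_k)|$, I set $n := p + k$ and $m := 2^k$, so that $n + m$ equals $\delta(\fibergraph{A_k}{\veb}{\Graver(A_k)})$ by Proposition~\ref{prop:MinDeg}.

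The first two hypotheses of Proposition~\ref{prop:CombiningGraphs} fall out of earlier results. Since $V_1$ and $V_2$ are adjacent, both $s-1 \ge l(\veb) = \|\vew_1^-\|_\infty$ and $c - s \ge \|\vew_2^-\|_\infty$ hold, so in both boxes one of the two support terms from Lemma~\ref{lem:PathsInbox} is automatically of full size $k$. This shows that the vertex-connectivity (hence the edge-connectivity) of $\normalgraph{V_1}{\Graver(A_k)}$ and $\normalgraph{V_2}{\Graver(A_k)}$ is at least $n$. Proposition~\ref{prop:GraverDegree} supplies the cross-neighbor lower bound $m = 2^k$.

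The main step is to provide the $m = 2^k$ vertex-disjoint paths from $v_1$ to $v_2$ using only edges with one endpoint in $V_1$ and the other in $V_2$ that Proposition~\ref{prop:CombiningGraphs} requires as input. I parametrize them by $\vev \in \normalbox{\mathbf{1}_k}$: for $\vev = \vev_1^*$ the given edge itself serves as a trivial length-one path, while for $\vev \neq \vev_1^*$ I fix in advance a bijection $\phi\colon \normalbox{\mathbf{1}_k}\setminus\{\vev_1^*\} \to \normalbox{\mathbf{1}_k}\setminus\{\vev_2^*\}$ and construct the length-three path
\begin{equation*}
v_1 \xlongleftrightarrow{-\gr{\vev}{\vev_2^*}} \fibervert{\vex+\vev}{\vey-\vev_2^*}{s} \xlongleftrightarrow{\gr{\vev}{\phi(\vev)}} \fibervert{\vex}{\vey-\vev_2^*+\phi(\vev)}{s-1} \xlongleftrightarrow{-\gr{\vev_1^*}{\phi(\vev)}} v_2.
\end{equation*}
Distinct values of $\vev$ yield distinct intermediate vertices in $V_2$ (they differ in the first block) and distinct $\phi(\vev)$'s yield distinct intermediate vertices in $V_1$; the exclusions $\vev \neq \vev_1^*$ and $\phi(\vev) \neq \vev_2^*$ guarantee that the two intermediate vertices never coincide with $v_2$ or $v_1$, respectively, so all $2^k$ paths are pairwise vertex-disjoint.

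The main obstacle is verifying that every Graver move along these paths is applicable, i.e., that the support conditions \eqref{equ:EdgePlusGraver} and \eqref{equ:EdgeMinusGraver} hold at the respective vertices. The key observation is that the validity of the original edge already forces $\vev_2^*$ to satisfy \eqref{equ:EdgeMinusGraver} at $(\vex,\vey,s-1)$, which both implies $\vev_2^*\le\vey$ coordinatewise and controls the slack $\vew_2+(c-s+1)\cdot\mathbf{1}_k-\vey$ in the coordinates outside $\supp(\vev_2^*)$; combined with the memberships $\vex \in \normalbox{\vew_1+(s-1)\cdot\mathbf{1}_k}$ and $\vey \in \normalbox{\vew_2+(c-s+1)\cdot\mathbf{1}_k}$, these inequalities validate each of the three moves uniformly in $\vev$ and $\phi(\vev)$. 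With the $2^k$ vertex-disjoint cross-edge paths in hand, Proposition~\ref{prop:CombiningGraphs} then produces the required $n+m = \delta(\fibergraph{A_k}{\veb}{\Graver(A_k)})$ edge-disjoint paths connecting $v_1$ and $v_2$.
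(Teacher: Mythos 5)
Your approach is the same as the paper's: verify the hypotheses of Proposition~\ref{prop:CombiningGraphs} with $n := p+k$ and $m := 2^k$ so that $n+m = \delta(\fibergraph{A_k}{\veb}{\Graver(A_k)})$ by Proposition~\ref{prop:MinDeg}, and then supply $2^k$ vertex-disjoint paths between $v_1$ and $v_2$ that only use cross-box edges. Your explicit length-three paths are exactly the paths implicit in the paper's argument, which instead observes that $W_{s-1} := \left\{\fibervert{\vex}{\vey-\vev_2^*+\vez}{s-1} : \vez\in\normalbox{\mathbf{1}_k}\right\}$ and $W_s := \left\{\fibervert{\vex+\vez}{\vey-\vev_2^*}{s} : \vez\in\normalbox{\mathbf{1}_k}\right\}$ induce a complete bipartite subgraph containing the given edge and reads off the $2^k$ vertex-disjoint paths from that. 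Your applicability checks for the three moves are terse but correct: the first move reuses the condition \eqref{equ:EdgeMinusGraver} already granted by the original edge, the second only needs $\vex\in\normalbox{\vew_1+(s-1)\cdot\mathbf{1}_k}$, and the third again uses \eqref{equ:EdgeMinusGraver} for $\vev_2^*$ together with the box membership of $\vey$.

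There is, however, a gap: you never verify the standing hypothesis $m\ge n+2$ of Proposition~\ref{prop:CombiningGraphs}. This inequality is used essentially in its proof (it is what guarantees that, after accounting for up to two cross-edges consumed at each $w_i$ by the $P_j$, there remain at least $n$ unused cross-neighbors from which to select the distinct $k_i$). With your notation it reads $2^k\ge p+k+2$, and since $p$ can be as large as $k$ this is genuinely a constraint and not a tautology. The paper does attempt this step, writing $m:=2^k\ge 2k-2\ge n-2$, which contains a sign error and should read $2^k\ge 2k+2\ge n+2$; even then the chain only holds for $k\ge 3$. Simply stating that $n+m=\delta$ does not discharge the side condition $m\ge n+2$, so this must be argued (or the small-$k$ cases handled separately) before Proposition~\ref{prop:CombiningGraphs} can be invoked.
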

\begin{proof}
By assumption, there exist at least two boxes in
$\fibergraph{A_k}{\veb}{\Graver(A_k)}$ and hence we must have $l(\veb)<u(\veb)$.
Without restricting generality, we can assume that the edge between the two adjacent \vertices\ looks like: 
\begin{equation}\label{equ:ReplaceEdge}
\veu_1:=\fibervert{\vex}{\vey}{s}\xlongleftrightarrow{\gr{\vev_1}{\vev_2}}\fibervert{\vex-\vev_1}{\vey+\vev_2}{s-1}:=\veu_2
\end{equation}
with $s>l(\veb)$ and $\vev_1,\vev_2\in\normalbox{\mathbf{1}_k}$. Let
us verify the assumptions of Proposition~\ref{prop:CombiningGraphs}. As already shown in Lemma~\ref{lem:PathsInbox}, the edge-connectivity in the two graphs $\normalgraph{\cbox{s-1}{\veb}}{\Graver(A_k)}$ and $\normalgraph{\cbox{s}{\veb}}{\Graver(A_k)}$ is at least
$$n:=\min_{j\in\{1,2\}}\{|\supp(\vew_j+\|\vew_j^-\|_\infty\cdot\mathbf{1}_k)|\}+k.$$
Since we have $m:=2^k\ge 2k-2\ge n-2$ it is left to prove that there are $2^k$ \vertex-disjoint paths connecting $\veu_1$ with $\veu_2$ and which only use edges between $\cbox{s-1}{\veb}$ and $\cbox{s}{\veb}$. For this, we define the sets 
\begin{equation}
\begin{split}
W_s&:=\{\fibervert{\vex-\vev_1+\vez}{\vey}{s}: \vez\in\normalbox{\mathbf{1}_k}\}\subset\cbox{s}{\veb}\\
W_{s-1}&:=\{\fibervert{\vex-\vev_1}{\vey+\vez}{s-1}: \vez\in\normalbox{\mathbf{1}_k}\}\subset\cbox{s-1}{\veb}.
\end{split}
\end{equation}
It is easy to see that $W_s$ is completely contained in the neighborhood of every \vertex\ in $W_{s-1}$ and vice versa. This means that $\fibergraph{A_k}{\veb}{\Graver(A_k)}$ has a complete bipartite graph on the \vertex\ sets $W_s$ and $W_{s-1}$ as subgraph including our original edge \eqref{equ:ReplaceEdge}. This gives $2^k$ many \vertex-disjoint paths between $\veu_1$ and $\veu_2$ only using edges between $\cbox{s}{\veb}$ and $\cbox{s-1}{\veb}$. Applying Proposition~\ref{prop:CombiningGraphs}, we obtain $m+n=\delta(\fibergraph{A_k}{\veb}{\Graver(A_k)})$ edge-disjoint connecting paths connecting $\veu_1$ and $\veu_2$.
\eoproof
\end{proof}
\par{Combining all the results of this section, we obtain our main theorem.}
\begin{theorem}\label{thm:GraverConnectivity}
For $k>0$, the edge-connectivity in all Graver fiber graphs of $A_k$ equals its minimal degree. 
\end{theorem}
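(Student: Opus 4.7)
The plan is to observe that Theorem~\ref{thm:GraverConnectivity} follows by combining Lemma~\ref{lemma:ReplacingEdgesinbox} and Lemma~\ref{lem:EdgesBetweenboxes} with the edge-analogue of Liu's criterion given in Lemma~\ref{lem:EdgeConnectivity}. More precisely, for any fiber graph $G := \fibergraph{A_k}{\veb}{\Graver(A_k)}$ with $\fiber{A_k}{\veb}\neq\emptyset$, the inequality $\delta(G)\ge\lambda(G)$ holds trivially, so it suffices to prove $\lambda(G)\ge\delta(G)$.

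For this, I first invoke the structural observation already made in Section~\ref{sec:FiberStructure}: by the explicit description of $\Graver(A_k)$ in Theorem~\ref{theorem:Graver basis of A_k}, every Graver move has last-but-one coordinate in $\{-1,0,1\}$. Consequently, along every edge of $G$ the box-index $s$ either remains unchanged (the edge lies entirely inside some $\cbox{s}{\veb}$) or changes by exactly one (the edge connects $\cbox{s}{\veb}$ to $\cbox{s\pm1}{\veb}$). Together with the partition \eqref{equ:FiberPresentation} of $\fiber{A_k}{\veb}$, this means that every edge of $G$ falls into exactly one of the two cases treated by Lemma~\ref{lemma:ReplacingEdgesinbox} and Lemma~\ref{lem:EdgesBetweenboxes}.

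Next I would apply those two lemmas edge by edge: if $\{\veu,\vev\}$ is an edge inside a single box, Lemma~\ref{lemma:ReplacingEdgesinbox} yields $\delta(G)$ vertex-disjoint paths from $\veu$ to $\vev$, which are in particular edge-disjoint; if instead $\{\veu,\vev\}$ connects two different boxes, Lemma~\ref{lem:EdgesBetweenboxes} yields $\delta(G)$ edge-disjoint paths directly. In either case, between any two adjacent vertices of $G$ there are at least $\delta(G)$ edge-disjoint paths. Lemma~\ref{lem:EdgeConnectivity} (applied with $k=\delta(G)$) then gives $\lambda(G)\ge\delta(G)$, and combined with $\delta(G)\ge\lambda(G)$ we conclude $\lambda(G)=\delta(G)$, as required.

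No real obstacle remains at this stage: all the heavy lifting has been done in Section~\ref{sec:FiberStructure} and in the preceding two lemmas, where the careful case analyses (boundary boxes $l(\veb)=u(\veb)$ versus $l(\veb)<u(\veb)$, the choice of auxiliary vectors $\vev_1,\vev_1'$, and the use of Proposition~\ref{prop:CombiningGraphs} to merge box-internal and box-crossing paths) were already handled. The only thing one must be slightly careful about is verifying that every edge really does fit into exactly one of the two lemma hypotheses, which is exactly the content of the structural remark about $\Graver(A_k)$ above; this guarantees a clean case split and completes the argument.
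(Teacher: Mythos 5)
Your proposal is correct and follows essentially the same route as the paper: reduce to adjacent vertices via the edge-analogue of Liu's criterion (Lemma~\ref{lem:EdgeConnectivity}), split edges into within-box and between-box using the decomposition \eqref{equ:FiberPresentation}, and invoke Lemma~\ref{lemma:ReplacingEdgesinbox} and Lemma~\ref{lem:EdgesBetweenboxes} respectively to produce $\delta$ edge-disjoint paths in each case. The only difference is that you make explicit the (correct) observation that every Graver move changes the box-index $s$ by at most one, which the paper leaves implicit.
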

\begin{proof}
From Lemma~\ref{lem:EdgeConnectivity} we know that we only have to consider paths between adjacent \vertices. From the decomposition of the fiber $\fiber{A}{\veb}$ given in \eqref{equ:FiberPresentation} we obtain that there are only two kinds of edges: edges within boxes and edges connecting two neighboring boxes. Lemma~\ref{lemma:ReplacingEdgesinbox} and Lemma~\ref{lem:EdgesBetweenboxes} state that we found in both cases $\delta(\fibergraph{A_k}{\veb}{\Graver(A_k)})$ many edge-disjoint paths connecting the adjacent \vertices\ of that edge.\eoproof{} 
\end{proof}

\par{Unfortunately, Theorem~\ref{thm:GraverConnectivity} says nothing about the \vertex-connectivity of the fiber graphs and we do not know whether it is best possible or not. Nevertheless, the results of this section make us suggest that requiring the Graver basis as set of edges should suffice that the edge-connectivity (not the \vertex-connectivity!) equals the minimal degree in all fiber graphs of arbitrary integer matrices.}

\begin{conjecture}Let $A\in\Z^{d\times n}$ be an integer matrix with $\ker(A)\cap\Zge^n=\{\mathbf{0}_n\}$. Then in all Graver fiber graphs of $A$, the edge-connectivity equals its minimal degree.
\end{conjecture}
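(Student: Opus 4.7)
The plan is to apply Lemma~\ref{lem:EdgeConnectivity}: it suffices to show that any two adjacent \vertices\ $\veu$ and $\vev:=\veu+\veg$, with $\veg\in\Graver(A)$, are joined by at least $\delta:=\delta(\fibergraph{A}{\veb}{\Graver(A)})$ edge-disjoint paths. The natural strategy is to assign, to each of the remaining Graver-neighbors $\veu+\veh$ of $\veu$ (with $\veh\in\pm\Graver(A)$ and $\veh\neq\veg$), a distinct path $\pi_\veh$ from $\veu+\veh$ to $\vev$; together with the trivial edge-path this yields the $\delta$ edge-disjoint paths required, possibly after pruning since $\veu$ may have more than $\delta$ neighbors.

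For the existence of each $\pi_\veh$ I would exploit the standard \emph{conformal Graver decomposition}: the kernel element $\veg-\veh$ admits a representation $\veg-\veh=\sum_{i=1}^{t}\veg_i$ with each $\veg_i\in\pm\Graver(A)$ and signs conformal with those of $\veg-\veh$. A classical reordering argument (see \cite[Chapter~3]{Loera2013}) produces an order in which the partial sums, added to $\veu+\veh$, remain in $\fiber{A}{\veb}$, giving a concrete walk $\pi_\veh$ from $\veu+\veh$ to $\vev$ of length $t$.

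The crux of the problem is edge-disjointness. Collisions come in two flavors: (i) some $\pi_\veh$ traverses the distinguished edge $\{\veu,\vev\}$, and (ii) two paths $\pi_\veh,\pi_{\veh'}$ share an interior edge. Type (i) can likely be ruled out using the freedom in the conformal decomposition to keep $\pi_\veh$ away from $\veu$ after its first move. Type (ii) is fundamentally harder: the proof of Theorem~\ref{thm:GraverConnectivity} resolved it for $A_k$ only by exploiting the box decomposition~\eqref{equ:FiberPresentation}, routing the different detours through pairwise \vertex-disjoint copies of a slack-box neighborhood as in Lemma~\ref{lemma:ReplacingEdgesinbox} and Lemma~\ref{lem:EdgesBetweenboxes}, and then invoking Proposition~\ref{prop:CombiningGraphs} to upgrade from local \vertex-disjointness to global edge-disjointness.

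I therefore expect the main obstacle to be the lack of such a local structural decomposition for an arbitrary matrix $A$. A realistic line of attack is to first establish the conjecture for classes in which a local Cartesian-product structure persists --- for instance matrices whose Graver basis has uniform support type, or matrices arising from contingency tables --- and iteratively apply Proposition~\ref{prop:CombiningGraphs}. A fully general proof will probably require either a Menger-type routing theorem tailored to lattice ideals or a finer exploration of the Graver fan producing canonically labelled, edge-disjoint reroutings for every edge of the fiber graph simultaneously.
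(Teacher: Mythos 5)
The statement you were asked to prove is not a theorem of the paper; it is the paper's closing \textbf{conjecture}, stated without proof immediately after the authors establish the special case $A=A_k$ (Theorem~\ref{thm:GraverConnectivity}). There is therefore no proof in the paper to compare yours against, and --- to your credit --- your write-up reads honestly as a research plan rather than a claimed proof. Your assessment of the landscape is accurate: reducing to Lemma~\ref{lem:EdgeConnectivity} is the right opening move; the conformal Graver decomposition does yield a fiber-internal walk $\pi_\veh$ from each neighbor $\veu+\veh$ to $\vev$ (the standard Graver-connectivity argument); and the genuine obstruction is exactly where you locate it, namely forcing those $\delta\bigl(\fibergraph{A}{\veb}{\Graver(A)}\bigr)$ paths to be pairwise edge-disjoint. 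For $A_k$ the paper gets this essentially for free from the box decomposition $\fiber{A_k}{\veb}=\bigcup_s\cbox{s}{\veb}$: each $\cbox{s}{\veb}$ is a Cartesian product of slacked boxes (Lemma~\ref{lem:PathsInbox}), boxes are joined by at least $2^k$ Graver moves per vertex (Proposition~\ref{prop:GraverDegree}), and Proposition~\ref{prop:CombiningGraphs} glues the two layers together. None of that structure is available for an arbitrary $A$, and the paper offers no substitute.

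Two cautions on the sketch itself. Your dismissal of type-(i) collisions is not automatic: if $-\veh$ occurs as a conformal partial sum of $\veg-\veh$ (equivalently $-\veh\sqsubseteq\veg-\veh$), then \emph{every} reordering of that particular conformal decomposition routes $\pi_\veh$ back through $\veu$, so you would need either to choose a different decomposition or to re-route through a different first step, neither of which is justified in your sketch. More seriously, for type-(ii) collisions you currently have no mechanism at all: conformality of the decomposition constrains the sign pattern of the steps but says nothing about which intermediate lattice points (hence which edges) distinct walks $\pi_\veh$ and $\pi_{\veh'}$ visit, so there is no a priori reason that they avoid a shared edge. In short, your diagnosis is correct, the plan of attack is plausible, but the central step is missing --- and is known to be missing, since the conjecture is open in the paper and in your proposal alike.
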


\section{Computational Results}\label{sec:ComputationalResults}
\par{In this section, we present how random walks on fiber graphs of
$A_k$ behave. Therefore, let us first introduce briefly the
framework. Let $G=(\{v_1,\ldots,v_n\},E)$ be a simple graph. Consider the random walk which has
for $i,j\in[n]$ the probability
\begin{equation*}\label{equ:Metropolis}
	p_G(v_i,v_j)=
		\begin{cases}
\min\{1/\deg(v_i),1/\deg(v_j)\},&\textnormal{if }\{v_i,v_j\}\in	E\textnormal{ and }i\neq j\\
\sum_{\{v_i,v_k\}\in E}\max\{0,1/\deg(v_i)-1/\deg(v_k)\},&\textnormal{if }i=j\\
0,&\textnormal{if }\{v_i,v_j\}\not\in E\\
		\end{cases}
\end{equation*}
to traverse from $v_i$ to $v_j$. The matrix
$P_G=(p_G(v_i,v_j))_{i,j\in[n]}$ is precisely the transition
probability matrix of the Metropolis-Hastings chain on $G$ whose stationary distribution is the
uniform distribution on $\{v_1,\ldots,v_n\}$~\cite[Section~1.2.2]{Boyd2004}.
Given a vertex $v_i$ and a time step $t\in\N$, the
$j$th-entry of the vector $P_G^t\cdot\vece_i\in[0,1]^n$ is the probability that
a random walk starting at $v_i$ is at $v_j$ in time step $t$.
Let $\mu(P_G)\in[0,1]$ be the \emph{second largest eigenvalue modulus} (SLEM) 
of $P_G$. Since $(P_G^t\cdot\vece_i)_{t\in\N}$ converges to uniform
$\frac{1}{n}\cdot\mathbf{1}_n$ asymptotically with
$\mu(P_G)^t$~\cite[Section~1.1.2]{Boyd2004}, $\mu(P_G)$ is an
indicator of how fast the convergence of the
corresponding Markov chain towards its stationary distribution is.}
\par{In our experiments with \emph{Macaulay2}
\cite{Macaulay} we considered this random walk
on the fiber graphs
$\fibergraph{A_k}{\vece_{2k+1}}{\Graver(A_k)}$ and
$\fibergraph{A_k}{\vece_{2k+1}}{\GB}$, respectively.
\begin{figure}[htbp]
\begin{minipage}{0.5\textwidth}
\begin{tikzpicture}
	\begin{axis}[
		xlabel={$k$},
		ylabel={SLEM},
		legend style={at={(0.7,0.1)},anchor=south,draw=none},
		legend entries={Graver, Gr\"obner}
		]
		\addplot table {\fEslemgraver};
		\addplot table {\fEslemgroebner};
	\end{axis}
\end{tikzpicture}
\end{minipage}
\begin{minipage}{0.5\textwidth}
\begin{tikzpicture}
	\begin{axis}[
		xlabel={$k$},
		ylabel={Mixing Time},
		legend style={at={(0.3,0.3)},anchor=south,draw=none},
		legend entries={Graver, Gr\"obner}
		]
		\addplot table {\fEmixingtimegraver};
		\addplot table {\fEmixingtimegroebner};
	\end{axis}
\end{tikzpicture}
\end{minipage}
\caption{Plots of SLEM and mixing time of $\fiber{A_k}{\vece_{2k+1}}$ with respect to Graver and Gr\"obner moves.}\label{fig:PlotExp1}
\end{figure}
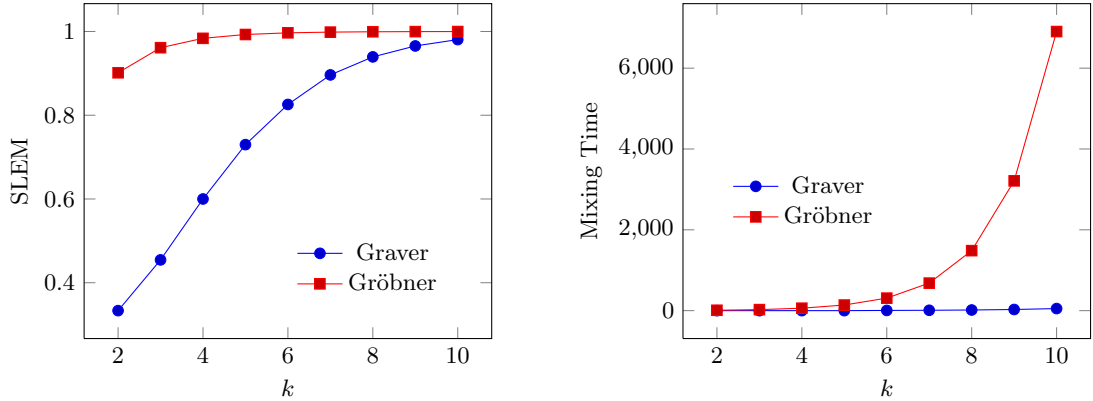
The left plot of Figure~\ref{fig:PlotExp1} shows how the SLEM of those
chains behaves if $k$ rises. It seems that both the SLEM of the
Gr\"obner chain and the SLEM of the Graver chain tend to $1$ as $k$
rises. The difference of the convergence of those two graphs becomes
even more visible by plotting their mixing times. Whereas the mixing time of $\fiber{A_{10}}{\vece_{10}}$ with Gr\"obner moves is around $7000$, the mixing time of the same fiber using Graver moves instead is approximately $50$.}
\begin{figure}[htbp]
\begin{minipage}{0.5\textwidth}
\begin{tikzpicture}
	\begin{axis}[
		xlabel={$\lambda$},
		ylabel={SLEM},
		legend style={at={(0.7,0.1)},anchor=south,draw=none},
		legend entries={Graver, Gr\"obner}
		]
		\addplot table {\sEslemgraver};
		\addplot table {\sEslemgroebner};
	\end{axis}
\end{tikzpicture}
\end{minipage}
\begin{minipage}{0.5\textwidth}
\begin{tikzpicture}
	\begin{axis}[
		xlabel={$\lambda$},
		ylabel={Mixing Time},
		legend style={at={(0.7,0.3)},anchor=south,draw=none},
		legend entries={Graver, Gr\"obner}
		]
		\addplot table {\sEmixingtimegraver};
		\addplot table {\sEmixingtimegroebner};
	\end{axis}
\end{tikzpicture}
\end{minipage}
\caption{Plots of SLEM and mixing time of $\fiber{A_3}{\lambda\cdot\vece_{7}}$ with respect to Graver and Gr\"obner moves.}\label{fig:PlotExp2}
\end{figure}
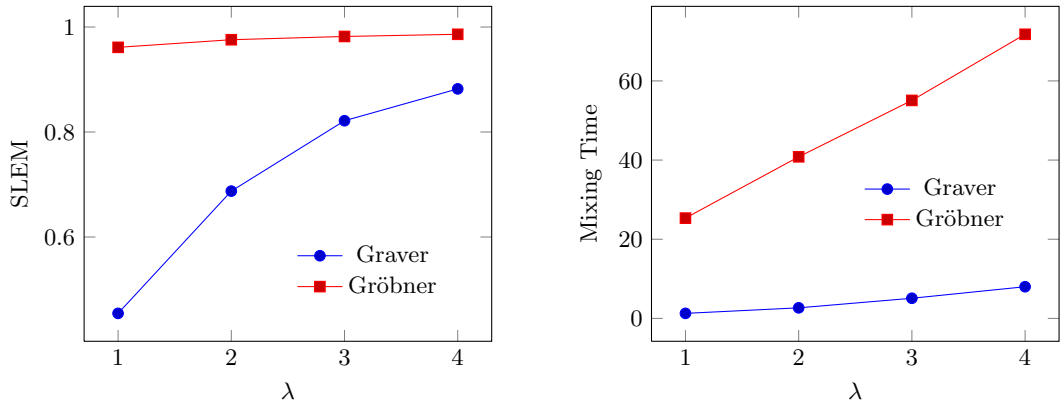
\par{In another experiment, we fixed $k=3$ and we computed SLEM and
mixing time of $\fiber{A_3}{\lambda\cdot\vece_7}$ with respect
to Graver and Gr\"obner moves for rising $\lambda\in\N$. Even if we do not know the connectivity of Gr\"obner fiber graphs of $A_3$ for right-hand sides $\veb\neq\vece_{7}$ in general, the Graver moves lead in our tested cases to a substantial better mixing time.}
\par{{\bf ACKNOWLEDGEMENTS.} The second author was supported by TopMath, a graduate program of the Elite Network of Bavaria and the TUM Graduate School. He further acknowledges support from the German National Academic Foundation.}

\bibliographystyle{plain}

\begin{thebibliography}{1}

\bibitem{Bjorner2010}
Anders Bj\"{o}rner and Kathrin Vorwerk.
\newblock {\em Connectivity of chamber graphs of buildings and related complexes}.
\newblock {European Journal of Combinatorics}, 31(8):2149--2160, 2010.

\bibitem{Boyd2004}
Stephen Boyd, Persi Diaconis, and Lin Xiao.
\newblock {\em Fastest Mixing Markov Chain on a Graph}.
\newblock {SIAM Review}, 46(4):667-689, 2004.

\bibitem{W.S.Chiue1999}
Wen-Sz Chiue and Bih-Sheue Shieh.
\newblock {\em On connectivity of the Cartesian product of two graphs}.
\newblock {Applied Mathematics and Computation}, 102(2-3):129--137, 1999.

\bibitem{Diaconis1998}
Persi Diaconis and Bernd Sturmfels.
\newblock {\em Algebraic algorithms for sampling from conditional distributions}.
\newblock {The Annals of statistics}, 26(1):363--397, 1998.

\bibitem{Diestel2000}
Reinhard Diestel.
\newblock {\em{Graph Theory}}.
\newblock {Springer-Verlag}, New York, Berlin, Heidelberg, second edition, 2000.

\bibitem{Drton2008}
Mathias Drton, Bernd Sturmfels, and Seth Sullivant.
\newblock {\em Lectures on algebraic statistics}.
\newblock {Birk\"{a}user Verlag AG}, Basel, Boston, Berlin, 2009.

\bibitem{Macaulay}
Daniel R. Grayson and Michael E. Stillman.
\newblock {\em Macaulay2}.
\newblock {A software system for research in algebraic geometry. Available at \emph{http://math.uiuc.edu/Macaulay2/}}.

\bibitem{Levin2008}
David A. Levin, Yuval Peres, and Elizabeth L. Wilmer.
\newblock {\em Markov Chains and Mixing Times}.
\newblock {American Mathematical Society}, 2008.

\bibitem{Liu1990}
Liu Gui-Zhen.
\newblock {\em Proof of a conjecture on matroid base graphs}.
\newblock {Science China Mathematics}, 33(11):1329, 1990.

\bibitem{Loera2013}
Jes\'{u}s A.~De Loera, Raymond Hemmecke, and Matthias K\"{o}ppe.
\newblock {\em{Algebraic and Geometric Ideas in the Theory of Discrete
  Optimization}}.
  \newblock {Cambridge University Press}, 2013.

\bibitem{Potka2013}
Samu Potka.
\newblock {\em Higher connectivity of fiber graphs of Gr{\"o}bner bases}.
\newblock {Journal of Algebraic Statistics}, 4(1):93--107, September 2013.

\bibitem{Sturmfels1996}
Bernd Sturmfels.
\newblock {\em{Gr\"{o}bner bases and convex polytopes}}.
\newblock {American Mathematical Society}, 1996.



\end{thebibliography}

\end{document}